\numberwithin{equation}{section}
\newtheorem{theorem}{Theorem}[section]
\newtheorem{lemma}[theorem]{Lemma}
\newtheorem{proposition}[theorem]{Proposition}
\theoremstyle{definition}
\newtheorem{definition}[theorem]{Definition}
\newtheorem{example}[theorem]{Example}
\newtheorem{remark}[theorem]{Remark}
\newcommand{\LJ}{\Lambda^2_-}
\date{}
\begin{document}
\title[$L_0,L_1$ and $2$-Cauchy-Fueter equation]{ A Pair of Multiplication-Type Operators in Quaternionic Analysis and the 2-Cauchy-Fueter Equation}
\date{\today}

\author{Yong Li}
\address[Yong Li]{School of Mathematics and Statistics, Anhui Normal University, Wuhu 241002, Anhui, People's Republic of China}
\email{leeey@ahnu.edu.cn}

\author{Yuchen Zhang*}
\address[Yuchen Zhang]{Institute of Mathematics, Academy of Mathematics and Systems Sciences, Chinese Academy of Sciences, Beijing 100190, China}
\email[Y. Zhang]{yuchen95@amss.ac.cn}

\thanks{
$\ast$ Corresponding author.
\\
This work is supported by the National Natural Science Foundation of China (Grant No. 12501100), the Anhui Provincial Natural Science Foundation (Grant No. 2508085QA031) and
the University Natural Science Research Project of Anhui Province (Grant No. 2022AH050175).}

\subjclass[2020]{Primary 30G35 ; Secondary 32F32}
\keywords{$k$-regular functions, multiplication-like operators, solvability of the $2$-Cauchy-Fueter equation.  }

\begin{abstract}

In this paper, we introduce a pair of multiplication-like operators, $L_0$ and $L_1$, which derive
$k$-regular functions from $(k+1)$-regular functions. The investigation of the inverse problem naturally leads to a deeper study of the 2-Cauchy-Fueter equation. In doing so, we provide a new acyclic resolution for the sheaf of $2$-regular functions $\mathcal{R}^{(2)}$. Furthermore, a complete topological characterization for the solvability of the $2$-Cauchy–Fueter equation is established. Specifically, we prove that the $2$-Cauchy–Fueter equation
$$\mathscr{D}^{(2)}f=g$$
 is solvable  for any $g$ satisfying  $\mathscr{D}_1^{(2)}g=0$ on a domain $\Omega\subset \mathbb{R}^4$ if and only if $H^3(\Omega, \mathbb{R}) = 0$, or equivalently, if and only if every real-valued harmonic function on $\Omega$ can be represented as the real part of a quaternionic regular function.
\end{abstract}
\maketitle
\tableofcontents
\section{Introduction}

The quaternion algebra $\mathbb{H}$, with basis ${1, \mathbf{i}, \mathbf{j}, \mathbf{k}}$, was introduced by Hamilton \cite{Hamilton66}. Quaternionic analysis—a generalization of complex analysis—encompasses a variety of distinct conceptual frameworks.

The theory of quaternionic regular functions represents the most natural generalization of holomorphic function theory in complex analysis, originating from the seminal work of Fueter \cite{Fueter35}. These functions serves as the quaternionic analogue of holomorphic functions, defined as those annihilated by the Cauchy–Fueter operator
\begin{equation*}
D = \frac{\partial}{\partial x_0}+\mathbf{i}\frac{\partial}{\partial x_1}+\mathbf{j}\frac{\partial}{\partial x_2}+\mathbf{k}\frac{\partial}{\partial x_3}.
\end{equation*}
Fueter established numerous properties of quaternionic regular functions that parallel those of holomorphic functions—including Laurent expansions and a Cauchy integral formula \cite{Fueter35,Fueter36}. And it have extensive applications in theoretical physics \cite{Adler86,Lanczos29}. Over time, this function theory has matured considerably \cite{Sudbery79, CSSS04} and has since been extended to the broader framework of Clifford analysis \cite{BDS82,Gilbert91,LY24,CSSS04}.

However, unlike holomorphic functions, quaternionic regular functions are not closed under multiplication—a direct consequence of the non-commutativity of the quaternions. Specifically, if $f$ and $g$ are quaternionic regular, neither $fg$ nor $gf$ is necessarily regular. This fundamental obstruction has been a major challenge in the development of the theory.

The $k$-Cauchy–Fueter operator $\mathscr{D}^{(k)}$ (see Definition \eqref{def:kCF}) originates in physics as the elliptic version of spin-$k/2$ massless field operators over Minkowski spacetime \cite{CMW16,EPW80,PR84,PR86}. The $k$-Cauchy-Fueter operator on the higher dimensional quaternionic space was introduced  in \cite{Wang10} and functions annihilated by it are referred to as $k$-regular functions, which can be viewed as a generalization of  regular functions of several  quaternionic variables introduced by Pertici in \cite{Pertici88}. Indeed, there exists a one-to-one correspondence $\tau$ (see Definition \eqref{eq:def of tau}) between $1$-regular functions and quaternionic regular functions. Kang and Wang \cite{KW13} derived a Taylor expansion for $k$-regular functions via the Penrose integral formula. A systematic introduction to $k$-Cauchy–Fueter operators and $k$-regular functions can be found in \cite{Wang10,Wang15,Wang19,Wang25,RZ23}.

The central question addressed in this work is how to overcome the lack of multiplicative closure among regular functions in order to construct new ones. By examining $k$-regular polynomials, we introduce a pair of multiplication-like operators, $L_0$ and $L_1$ (see Section \ref{section:L0L1}), which generate $k$-regular functions from $(k+1)$-regular functions. Specifically, we establish the following result.

\begin{theorem}\label{thm:chengfa}
   Let $\Omega$ be a domain of $\mathbb{R}^4$, $k\geqslant1$ and $f,g$ be $k$-regular on $\Omega$. Then $L_0f+L_1g$ is a $(k-1)$-regular function. In particular,
    \begin{itemize}
    \item If $f,g$ are $2$-regular on $\Omega$, then the function $\tau(L_0f+L_1g)$ is quaternionic regular.
    \item If $f,g$ are $1$-regular on $\Omega$, then $L_0f+L_1g$ is harmonic.
    \end{itemize}
\end{theorem}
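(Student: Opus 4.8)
The plan is to reduce Theorem~\ref{thm:chengfa} to an identity of linear differential operators and then verify that identity by a direct componentwise computation. I would begin by writing out, side by side, the explicit coordinate forms of the $k$- and $(k+1)$-Cauchy--Fueter operators $\mathscr{D}^{(k)}$ and $\mathscr{D}^{(k+1)}$ from Definition~\eqref{def:kCF}, as matrix differential operators acting on the $(k+1)$- and $(k+2)$-component vector functions that are symmetric in their tensor indices, together with the explicit formulas for $L_0$ and $L_1$ recalled from Section~\ref{section:L0L1}. Since $L_0$, $L_1$, $\mathscr{D}^{(k)}$ and $\mathscr{D}^{(k+1)}$ are all explicit operators, the assertion that $L_0 f$ and $L_1 g$ are $k$-regular whenever $f,g$ are $(k+1)$-regular is equivalent to a pair of operator identities of the shape
$$\mathscr{D}^{(k)}\circ L_0 = P_0\circ \mathscr{D}^{(k+1)},\qquad \mathscr{D}^{(k)}\circ L_1 = P_1\circ \mathscr{D}^{(k+1)}$$
for suitable matrix operators $P_0,P_1$ (possibly combined into a single coupled identity for the pair $(L_0,L_1)$); once these are in hand, substituting $\mathscr{D}^{(k+1)}f=0$ and $\mathscr{D}^{(k+1)}g=0$ gives $\mathscr{D}^{(k)}(L_0f+L_1g)=0$ by linearity of $\mathscr{D}^{(k)}$, which is exactly $k$-regularity of $L_0f+L_1g$.

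The heart of the argument is establishing these operator identities, which I would do by expanding both sides and matching the coefficients of the differential monomials that occur. Here the full strength of the hypothesis $\mathscr{D}^{(k+1)}f=0$ (the vanishing of \emph{every} component of $\mathscr{D}^{(k+1)}f$, for all free indices) is precisely what is needed to absorb the terms that do not cancel on their own, and this cancellation is where the specific choice of $L_0$ and $L_1$—read off from the structure of $k$-regular polynomials—does the work. The one genuine obstacle I anticipate is the bookkeeping: keeping careful track of the symmetric-tensor index structure of the source and target so that each scalar component equation is checked once and with the correct multiplicity. A slightly more roundabout alternative would be to verify the identity first on an explicit basis of $k$-regular polynomials (for instance via the Kang--Wang Taylor expansion) and then invoke the real-analyticity of regular functions to pass to general $f,g$; but the direct operator computation is cleaner and yields the sharper form that is convenient for the inverse problem studied later.

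Finally, the two ``in particular'' statements drop out of the general case. For $k=1$: $L_0f+L_1g$ is $1$-regular on $\Omega$ by the general statement, and since $\tau$ (Definition~\eqref{eq:def of tau}) is the one-to-one correspondence carrying $1$-regular functions to quaternionic regular functions, $\tau(L_0f+L_1g)$ is quaternionic regular. For $k=0$: the general case gives that $L_0f+L_1g$ is $0$-regular, i.e.\ harmonic (the $0$-regular functions being precisely the harmonic ones), which completes the plan.
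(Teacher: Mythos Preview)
Your proposal is correct and follows essentially the same route as the paper: the paper establishes the exact intertwining identity $\mathscr{D}^{(k-1)}L_j = L_j\mathscr{D}^{(k)}$ for $k\geqslant 2$ (so your $P_j$ turns out to be $L_j$ itself, acting on $C^\infty(\Omega,\mathbb{C}^2\otimes\odot^{k-1}\mathbb{C}^2)$), and the cross terms cancel purely from the symmetry $f_{0'1'\cdots}=f_{1'0'\cdots}$ rather than from the regularity hypothesis. The case $k=0$ is treated separately in the paper via Sudbery's observation that $q$ times a regular function is harmonic, not via the general operator identity, so you should not expect it to fall out of the same computation.
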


This result motivates the following inverse problem: given any complex-valued harmonic function $h$ on a domain $\Omega$, do there exist $1$-regular functions $f$ and $g$ such that $h=L_0f+L_1g$?

We establish a sufficient condition for this to be true.

\begin{theorem}\label{thm:inverse problem}
Let $\Omega$ be a domain of $\mathbb{R}^4$ with $H^3(\Omega,\mathbb{R})=0$ and $0\notin \Omega$. Then for any harmonic function $h\in C^\infty(\Omega,\mathbb{C})$, there is a pair of $1$-regular functions $f$ and $g$ such that $$h=L_0f+L_1g.$$
More precisely,
there exists $f_0,f_1,g_0,g_1\in C^\infty(\Omega,\mathbb{C})$ such that
\begin{equation}\label{eq:dec of h}
 h=(x_2+x_3i)f_0+(x_0+x_1i)f_1+(-x_0+x_1i)g_0+(x_2-x_3i)g_1
\end{equation}
and $D(\mathbf{j}f_0-f_1)=D(\mathbf{j}g_0-g_1)=0.$
\end{theorem}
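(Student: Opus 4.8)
The plan is to recast the decomposition as a surjectivity question for a morphism of sheaves on $\mathbb{R}^4\setminus\{0\}$ and to reduce the obstruction to a single sheaf cohomology group which the hypothesis $H^3(\Omega,\mathbb{R})=0$ kills. Let $\mathcal{R}$ be the sheaf of quaternionic (Cauchy–Fueter) regular functions, identified via $\tau$ with the sheaf $\mathcal{R}^{(1)}$ of $1$-regular functions, and let $\mathcal{H}$ be the sheaf of $\mathbb{C}$-valued harmonic functions. Writing a regular function in the form $F=f_0+\mathbf{j}f_1$ with $DF=0$, the right-hand side of \eqref{eq:dec of h} is, by Theorem~\ref{thm:chengfa} (case $k=1$), the value of a sheaf morphism
\[
\Phi\colon\mathcal{R}\oplus\mathcal{R}\longrightarrow\mathcal{H},\qquad \Phi(F,G)=(x_0+x_1i)f_0-(x_2+x_3i)f_1+(x_2-x_3i)g_0+(x_0-x_1i)g_1 ,
\]
where $F=f_0+\mathbf{j}f_1$ and $G=g_0+\mathbf{j}g_1$. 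Theorem~\ref{thm:inverse problem} is precisely the statement that $\Phi$ is surjective on sections over any $\Omega$ with $0\notin\Omega$ and $H^3(\Omega,\mathbb{R})=0$. The core of the argument is to establish the short exact sequence of sheaves
\begin{equation*}
0\longrightarrow\mathcal{R}^{(2)}\xrightarrow{\ \iota\ }\mathcal{R}\oplus\mathcal{R}\xrightarrow{\ \Phi\ }\mathcal{H}\longrightarrow0 \tag{$\star$}
\end{equation*}
on $\mathbb{R}^4\setminus\{0\}$ — this is the acyclic resolution of $\mathcal{R}^{(2)}$ advertised in the introduction — and then pass to cohomology.

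Proving $(\star)$ requires two things. First, $\Phi$ must be a sheaf epimorphism: given a harmonic $h$ near a point $p\neq0$ one solves $\Phi(F,G)=h$ locally. The four coefficient functions in $\Phi$ are the entries of the $2\times2$ complex matrix representing the quaternion $x$, which is invertible whenever $x\neq0$; after an explicit algebraic manipulation this lets one reduce the equation to an inhomogeneous Cauchy–Fueter-type system on a small ball around $p$ disjoint from the origin, whose solvability is the local exactness of the $2$-Cauchy–Fueter complex (equivalently, cohomological triviality of small balls for the sheaves at hand). This is the step that forces $0\notin\Omega$. Second, $\ker\Phi\cong\mathcal{R}^{(2)}$: by Theorem~\ref{thm:chengfa} (case $k=1$) a $2$-regular $\Psi$ produces $1$-regular functions $L_0\Psi$ and $L_1\Psi$, and the same matrix bookkeeping shows that the pair $(L_0\Psi,L_1\Psi)$, after a suitable normalisation, lies in $\ker\Phi$; one then verifies that $\Psi\mapsto(L_0\Psi,L_1\Psi)$ is injective with image exactly $\ker\Phi$. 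This identifies $\ker\Phi$ with $\mathcal{R}^{(2)}$ and yields $(\star)$.

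Next, run the long exact cohomology sequence of $(\star)$ over $\Omega$. The Laplacian, being an elliptic constant-coefficient operator, is surjective on $C^\infty(U)$ for every open $U\subseteq\mathbb{R}^4$, so $0\to\mathcal{H}\to\mathcal{E}\xrightarrow{\Delta}\mathcal{E}\to0$ is a resolution by fine sheaves and $H^q(\Omega,\mathcal{H})=0$ for $q\ge1$; likewise $D$ is elliptic and determined (indeed $\bar D D=\Delta$), so $0\to\mathcal{R}\to\mathcal{E}_{\mathbb{H}}\xrightarrow{D}\mathcal{E}_{\mathbb{H}}\to0$ is a fine resolution and $H^q(\Omega,\mathcal{R})=0$ for $q\ge1$. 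Hence the long exact sequence of $(\star)$ degenerates to
\[
\Gamma(\Omega,\mathcal{R})^{\oplus2}\xrightarrow{\ \Phi\ }\Gamma(\Omega,\mathcal{H})\longrightarrow H^1(\Omega,\mathcal{R}^{(2)})\longrightarrow0,
\]
so the decomposition \eqref{eq:dec of h} exists for every harmonic $h$ on $\Omega$ if and only if $H^1(\Omega,\mathcal{R}^{(2)})=0$. On the other hand, $H^1(\Omega,\mathcal{R}^{(2)})$ is computed by the standard $2$-Cauchy–Fueter fine resolution as $\ker(\mathscr{D}_1^{(2)})/\operatorname{im}(\mathscr{D}^{(2)})$ over $\Omega$, and by the topological characterization of the solvability of $\mathscr{D}^{(2)}f=g$ proved in this paper this group vanishes exactly when $H^3(\Omega,\mathbb{R})=0$. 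Under the hypothesis we therefore obtain regular $F=f_0+\mathbf{j}f_1$ and $G=g_0+\mathbf{j}g_1$ with $\Phi(F,G)=h$; passing back through $\tau$ gives the required pair of $1$-regular functions, and $\Phi(F,G)=h$ is exactly \eqref{eq:dec of h} with $D(f_0+\mathbf{j}f_1)=D(g_0+\mathbf{j}g_1)=0$.

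The main obstacle is the exactness of $(\star)$ in the middle term, i.e.\ the identification $\ker\Phi\cong\mathcal{R}^{(2)}$: one must write down the explicit operators realizing $\mathcal{R}^{(2)}\hookrightarrow\mathcal{R}\oplus\mathcal{R}$, prove injectivity, and — the genuinely delicate point, where invertibility of the matrix of $x$ and hence $0\notin\Omega$ is used — prove that their image is all of $\ker\Phi$. A second technical point is the bridge between $H^1(\Omega,\mathcal{R}^{(2)})$ and $H^3(\Omega,\mathbb{R})$; if one prefers not to invoke the $2$-Cauchy–Fueter solvability theorem, this can instead be built directly by comparing the two fine resolutions of $\mathcal{R}^{(2)}$ with the de Rham complex, via $\bar D D=\Delta$ and the equivalence ``every real-valued harmonic function on $\Omega$ is the real part of a regular function $\iff H^3(\Omega,\mathbb{R})=0$''.
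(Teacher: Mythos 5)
Your overall strategy is sound, and it is essentially a sheaf-cohomological repackaging of what the paper does by hand. The paper exhibits the explicit particular solution
\begin{equation*}
2h=L_0\left(\frac{\overline{z_0^{0^\prime}}}{|q|^2}h,\frac{\overline{z_0^{1^\prime}}}{|q|^2}h\right)+L_1\left(\frac{\overline{z_1^{0^\prime}}}{|q|^2}h,\frac{\overline{z_1^{1^\prime}}}{|q|^2}h\right),
\end{equation*}
corrects it by a pair $(L_1X,-L_0X)$ --- which is annihilated by $(f,g)\mapsto L_0f+L_1g$ because $L_0L_1=L_1L_0$ --- and uses the intertwining relation $\mathscr{D}^{(1)}L_j=L_j\mathscr{D}^{(2)}$ of Proposition \ref{prop:L0L1} to show that the correction can be made $1$-regular iff a certain explicit $Y$ with $\mathscr{D}_1^{(2)}Y=0$ lies in the image of $\mathscr{D}^{(2)}$, i.e.\ iff its class in $H^1(\Omega,\mathcal{R}^{(2)})$ vanishes; Theorem \ref{thm:real valued harmonic function} then finishes the job. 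Your long-exact-sequence endgame, together with $H^1(\Omega,\mathcal{R})=0$ from Lemma \ref{lem:solution of Cauchy Fueter} and the equivalence $(1)\Leftrightarrow(2)$ of Theorem \ref{thm:real valued harmonic function}, is exactly the right global input.

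The genuine gap sits precisely where you write ``after an explicit algebraic manipulation'': the exactness of $(\star)$ at $\mathcal{H}$, i.e.\ local surjectivity of $\Phi$. This is \emph{not} a formal consequence of the invertibility of $\tau(q\mathbf{j})$ away from the origin: inverting the matrix only produces a smooth, generally non-regular, preimage of $h$, and to correct it to a regular one you must solve $\mathscr{D}^{(2)}X=Y$ locally, for which you must verify the compatibility condition $\mathscr{D}_1^{(2)}Y=0$ for the specific $Y$ built from $h/|q|^2$. That verification (the paper's computations \eqref{eq:thm inverse:check D1Y equals to 0}--\eqref{eq:thm inverse:check D1Y equals to 0:3}) uses harmonicity of $h$ together with the identities $\nabla_A^{B^\prime}|q|^2=2z_A^{B^\prime}$, $\nabla_A^{B^\prime}z_j^{k^\prime}=2\delta_{Aj}^{Bk}$ and $z_0^{0^\prime}z_1^{1^\prime}-z_0^{1^\prime}z_1^{0^\prime}=|q|^2$ in an essential way; without it the sequence $(\star)$ is not established and nothing downstream follows. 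The identification $\ker\Phi\cong\mathcal{R}^{(2)}$ is likewise asserted rather than proved; it is true --- for $q\neq0$ the system $L_1\Psi=f$, $-L_0\Psi=g$ has a unique smooth solution $\Psi$, whose symmetry $\Psi_{0^\prime1^\prime}=\Psi_{1^\prime0^\prime}$ follows from $L_0f+L_1g=0$ and whose $2$-regularity follows from the intertwining relation plus injectivity of $(L_0,L_1)$ --- but note the embedding must be $\Psi\mapsto(L_1\Psi,-L_0\Psi)$, not $(L_0\Psi,L_1\Psi)$. Once these two verifications are supplied, your argument closes and gives a clean, arguably more conceptual, presentation of the paper's proof.
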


As observed in \eqref{eq:dec of h}, the right-hand side equals zero at the origin, whereas the left-hand side does not necessarily vanish there. This is one reason why we require $0\notin \Omega$. More specifically, if we take $h=1$, Theorem \ref{thm:inverse problem} yields a decomposition of the constant function $1$ on certain domains excluding the origin.

It should also be noted that Theorem \ref{thm:inverse problem} imposes a topological condition on $\Omega$, namely $H^3(\Omega,\mathbb{R})=0$. This condition is used here to solve the $2$–Cauchy–Fueter equation. In fact, by constructing a new acyclic resolution of the sheaf $\mathcal{R}^{(2)}$ of $2$-regular functions (see Section \ref{section:A new resolution}), we provide a complete topological characterization of the solvability of the
$2$-Cauchy–Fueter equation.

 \begin{theorem}\label{thm:real valued harmonic function}
Let $\Omega\subset\mathbb{R}^4$ be a domain. Then the following are equivalent:
\begin{enumerate}[label=(\arabic*)]
  \item $H^3(\Omega,\mathbb{R})=0$.
  \item $H^1(\Omega,\mathcal{R}^{(2)})=0$. Equivalently, the $2$-Cauchy-Fueter equation is solvable on $\Omega$.
  \item For every real-valued harmonic function $h\in C^\infty(\Omega,\mathbb{R})$, there exists a quaternionic regular function $f\in C^\infty(\Omega,\mathbb{H})$  such that $\operatorname{Re}f=h$.
  \item $H^1(\Omega,\widetilde{\mathcal{R}} )=0,$ where $\widetilde{\mathcal{R}}$ is the subsheaf of quaternionic regular functions taking values in $\operatorname{Im}(\mathbb{H})$.
\end{enumerate}
\end{theorem}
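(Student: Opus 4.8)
The plan is to prove the three equivalences (1)$\Leftrightarrow$(2), (1)$\Leftrightarrow$(4), (4)$\Leftrightarrow$(3); together these give the theorem. The internal equivalence in (2) is formal: the $2$-Cauchy--Fueter complex $0\to\mathcal{R}^{(2)}\to\mathcal{E}^0\xrightarrow{\mathscr{D}^{(2)}}\mathcal{E}^1\xrightarrow{\mathscr{D}^{(2)}_1}\mathcal{E}^2\to\cdots$ (with $\mathcal{E}^\bullet$ the sheaves of smooth sections occurring in it) is a locally exact, hence acyclic, resolution of $\mathcal{R}^{(2)}$ by fine sheaves, so $H^1(\Omega,\mathcal{R}^{(2)})=\ker\mathscr{D}^{(2)}_1\big/\operatorname{im}\mathscr{D}^{(2)}$ over $\Omega$, whose vanishing is exactly solvability. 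For (1)$\Leftrightarrow$(2) I would invoke the new acyclic resolution of $\mathcal{R}^{(2)}$ constructed in Section~\ref{section:A new resolution}, again by fine sheaves, so that $H^1(\Omega,\mathcal{R}^{(2)})$ is the degree-$1$ cohomology of its complex of global sections; the point of that construction — combined with Theorem~\ref{thm:chengfa} realizing $L_0,L_1$ inside it and the isomorphism $\tau$ — is that on global sections this complex carries the same cohomological information in degree $1$ as the relevant truncation of the de Rham complex on $\Omega\subset\mathbb{R}^4$, giving $H^1(\Omega,\mathcal{R}^{(2)})\cong H^3(\Omega,\mathbb{R})$ up to complexification, which does not affect vanishing.

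The heart of the matter is a de Rham description of the subsheaf $\widetilde{\mathcal{R}}$. Writing an $\mathbb{H}$-valued function as $f=h+\vec u$ with $h=\operatorname{Re}f$ and $\vec u=u_1\mathbf{i}+u_2\mathbf{j}+u_3\mathbf{k}$, a direct computation with $D=\partial_{x_0}+\mathbf{i}\partial_{x_1}+\mathbf{j}\partial_{x_2}+\mathbf{k}\partial_{x_3}$ gives
\[
Df=0\iff d\beta_{\vec u}=*\,dh,\qquad \beta_{\vec u}:=\sum_{j=1}^{3}u_j\bigl(*(dx_0\wedge dx_j)-dx_0\wedge dx_j\bigr)\in\LJ,
\]
where $*$ is the Hodge star for the standard orientation of $\mathbb{R}^4$ and $\beta_{\vec u}$ ranges over all anti-self-dual $2$-forms as $\vec u$ varies. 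Taking $h\equiv 0$ identifies $\widetilde{\mathcal{R}}$ with the sheaf $\ker(d\colon\LJ\to\Lambda^3)$ of closed anti-self-dual $2$-forms. Moreover $*\,dh$ is closed if and only if $h$ is harmonic (as $d(*dh)=0\iff\Delta h=0$), which is what will give surjectivity of the real-part map below.

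The crucial analytic input is the identity $d\,\Gamma(\Omega,\LJ)=d\,\Gamma(\Omega,\Lambda^2)$ for every open $\Omega\subset\mathbb{R}^4$. To see it, the bundle morphism $d^{+}:=\mathrm{pr}_{+}\circ d\colon\Lambda^1\to\Lambda^2_{+}$ (orthogonal projection onto the self-dual $2$-forms) has surjective principal symbol, and the composite $d^{+}(d^{+})^{*}$ is a constant-coefficient second-order operator on $\Lambda^2_{+}$ with symbol a positive multiple of $|\xi|^2\,\mathrm{Id}$; hence $d^{+}(d^{+})^{*}$ is a constant multiple of the componentwise Laplacian and in particular surjective onto $C^\infty(\Omega,\Lambda^2_{+})$ for every open $\Omega$, whence $d^{+}$ is surjective onto $C^\infty(\Omega,\Lambda^2_{+})$ and likewise on germs. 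Given $\alpha=\alpha_{+}+\alpha_{-}\in\Gamma(\Omega,\Lambda^2)$, choosing $\theta$ with $d^{+}\theta=\alpha_{+}$ yields $d\alpha=d\bigl(\alpha_{-}-(d\theta)_{-}\bigr)$ with anti-self-dual primitive, proving the identity and its germ version. It follows that $d\colon\LJ\to\Lambda^3$ has sheaf image the closed $3$-forms $\mathcal{Z}^3=\ker(d\colon\Lambda^3\to\Lambda^4)$, so there is a short exact sequence of sheaves $0\to\widetilde{\mathcal{R}}\to\LJ\xrightarrow{d}\mathcal{Z}^3\to 0$; since $\LJ,\Lambda^3,\Lambda^4$ are fine, its long exact sequence together with the identity gives $H^1(\Omega,\widetilde{\mathcal{R}})\cong\Gamma(\mathcal{Z}^3)/d\,\Gamma(\LJ)=\Gamma(\mathcal{Z}^3)/d\,\Gamma(\Lambda^2)=H^3(\Omega,\mathbb{R})$, which is (1)$\Leftrightarrow$(4). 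For (4)$\Leftrightarrow$(3): the real-part map $\operatorname{Re}\colon\mathcal{R}\to\mathcal{H}_{\mathbb{R}}$ onto the sheaf of real harmonic functions is a sheaf epimorphism with kernel $\widetilde{\mathcal{R}}$ — surjective on stalks because, when $h$ is harmonic, $*\,dh$ is a closed (hence locally exact) $3$-form and by the germ identity equals $d\beta_{\vec u}$ for some $\vec u$, so $h+\vec u$ is a local regular function with real part $h$. Since $D$ and $\Delta$ are elliptic with constant coefficients they are surjective on $C^\infty(\Omega)$ for every $\Omega$, so $\mathcal{R}$ and $\mathcal{H}_{\mathbb{R}}$ carry length-one fine resolutions and $H^1(\Omega,\mathcal{R})=0$; the long exact sequence of $0\to\widetilde{\mathcal{R}}\to\mathcal{R}\xrightarrow{\operatorname{Re}}\mathcal{H}_{\mathbb{R}}\to 0$ then gives $H^1(\Omega,\widetilde{\mathcal{R}})\cong\operatorname{coker}\bigl(\operatorname{Re}\colon\Gamma(\Omega,\mathcal{R})\to\Gamma(\Omega,\mathcal{H}_{\mathbb{R}})\bigr)$, whose vanishing is precisely statement (3).

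I expect the main obstacle to be the identity $d\,\Gamma(\Omega,\LJ)=d\,\Gamma(\Omega,\Lambda^2)$ for arbitrary $\Omega$: this is where honest PDE, rather than homological bookkeeping, enters — namely the global surjectivity of $d^{+}$, via ellipticity of $d^{+}(d^{+})^{*}$ and the surjectivity of constant-coefficient elliptic operators on $C^\infty$ of every open set — and it is exactly what promotes $H^3(\Omega,\mathbb{R})=0$ from merely sufficient (as in Theorem~\ref{thm:inverse problem}) to necessary and sufficient. A secondary difficulty, carried out in Section~\ref{section:A new resolution}, is constructing the new resolution of $\mathcal{R}^{(2)}$ and verifying that, after passing to global sections, its degree-$1$ cohomology is governed by the same invariant $H^3(\Omega,\mathbb{R})$, so that (1) controls both (2) and the remaining conditions.
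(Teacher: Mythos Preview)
Your proposal is correct, and it takes a cleaner, more structural route than the paper's proof. The paper argues in a cycle $(1)\Rightarrow(2)\Rightarrow(3)\Rightarrow(1)$ together with $(3)\Leftrightarrow(4)$: it uses Proposition~\ref{prop:a new resolution} and Lemma~\ref{lem:essential} for $(1)\Rightarrow(2)$; for $(2)\Rightarrow(3)$ it writes out $D(f_1\mathbf{i}+f_2\mathbf{j}+f_3\mathbf{k})=-Dh$ and reduces via Lemma~\ref{lem:useful} to $dF=\ast dh$ with $F$ anti-self-dual; and for $(3)\Rightarrow(1)$ it solves componentwise Laplace equations and applies $(3)$ to $(dG)_{0123}$. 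Your approach instead isolates a single unconditional PDE fact, $d\,\Gamma(\Omega,\LJ)=d\,\Gamma(\Omega,\Lambda^2)$ for \emph{every} open $\Omega$, and uses it three times to obtain natural isomorphisms $H^1(\Omega,\mathcal{R}^{(2)})\cong H^3_{\mathrm{dR}}(\Omega,\mathbb{C})$ (via Proposition~\ref{prop:a new resolution}), $H^1(\Omega,\widetilde{\mathcal{R}})\cong H^3_{\mathrm{dR}}(\Omega,\mathbb{R})$ (via your identification $\widetilde{\mathcal{R}}\cong\ker(d\colon\LJ\to\Lambda^3)$, which is Lemma~\ref{lem:useful} rephrased), and $H^1(\Omega,\widetilde{\mathcal{R}})\cong\operatorname{coker}(\operatorname{Re})$ (the paper's exact-sequence argument). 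Your key identity is exactly what the proof of Lemma~\ref{lem:essential} establishes implicitly: the system~\eqref{eq:essential lemma: equivalent equations} there is precisely $d^{+}F=\alpha_{+}$, solved via the Cauchy--Fueter equation (Lemma~\ref{lem:solution of Cauchy Fueter}), independently of any hypothesis on $H^3$. Your alternative proof via ellipticity of $d^{+}(d^{+})^{*}$ is also valid --- on flat $\mathbb{R}^4$ that operator is indeed $\tfrac12$ times the componentwise Laplacian --- and is more conceptual, while the paper's Cauchy--Fueter route is more elementary and self-contained. What your approach buys is genuine isomorphisms (not merely equivalence of vanishing) and a single analytic input; what the paper's buys is explicit constructions at each step. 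One small correction: in your sketch of $(1)\Leftrightarrow(2)$ the reference to Theorem~\ref{thm:chengfa} and the operators $L_0,L_1,\tau$ is spurious --- Proposition~\ref{prop:a new resolution} already gives the resolution by de Rham operators, and only your key identity is needed to finish.
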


 It is also noteworthy that the equivalent description $(3)$ can be viewed as a unification of the following two established theorems.
\begin{theorem}\label{thm:C andH}
\begin{enumerate}
  \item Let $\Omega\subset\mathbb{C}$ be a domain, for every real-valued harmonic function $h\in C^\infty(\Omega,\mathbb{R})$, there exists a holomorphic function $f\in C^\infty(\Omega,\mathbb{C})$  such that $\operatorname{Re}f=h$ if and only if $\Omega$ is simply connected.
  \item \cite{Nono86}Let $\Omega\subset\mathbb{H}$ be a domain, for every complex-valued harmonic function $h\in C^\infty(\Omega,\mathbb{C})$, there exists complex-valued harmonic function  $g\in C^\infty(\Omega,\mathbb{H})$  such that $h+\mathbf{j}g$ is quaternionic regular if and only if $\Omega$ is domain of holomorphy in a suitable variable change.
\end{enumerate}

\end{theorem}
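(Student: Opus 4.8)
For the final statement, Theorem~\ref{thm:C andH}, the plan is to treat the two assertions separately, since part (1) is a classical fact of planar potential theory while part (2) is the theorem of Nono that we only cite. Both are included here to exhibit Theorem~\ref{thm:real valued harmonic function}(3) as their common generalization, so the goal is a clean self-contained argument for (1) and a precise pointer for (2).

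For part (1) I would argue through the harmonic conjugate. Given a real-valued harmonic $h$ on a domain $\Omega\subset\mathbb{C}$, a holomorphic $f$ with $\operatorname{Re}f=h$ exists precisely when $h$ admits a harmonic conjugate, i.e.\ a real $\tilde h$ with $f=h+\mathbf{i}\tilde h$ satisfying the Cauchy--Riemann equations $\tilde h_x=-h_y$, $\tilde h_y=h_x$. These two equations say exactly that $\tilde h$ is a primitive of the $1$-form $\omega=-h_y\,dx+h_x\,dy$, which is closed because $d\omega=(h_{xx}+h_{yy})\,dx\wedge dy=0$ by harmonicity. Hence the question reduces to whether every such closed form is exact on $\Omega$, that is, to the vanishing of the first de Rham cohomology $H^1_{\mathrm{dR}}(\Omega)$, which for an open connected subset of $\mathbb{C}$ is equivalent to simple connectivity. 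Thus if $\Omega$ is simply connected, $\omega=d\tilde h$ for a single-valued $\tilde h$ and $f=h+\mathbf{i}\tilde h$ is the desired function. For the converse I would exhibit an explicit obstruction: when $\Omega$ is not simply connected, $\mathbb{C}\setminus\Omega$ has a bounded component, so one may pick $z_0$ in it and set $h=\log|z-z_0|$; this $h$ is harmonic on $\Omega$, its conjugate form is $d\arg(z-z_0)$, and the integral of the latter over a loop encircling $z_0$ equals $2\pi\neq0$, so $\omega$ is not exact and $h$ is not the real part of any holomorphic function on $\Omega$.

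For part (2) the statement is attributed to \cite{Nono86}, so I would cite that paper for the full proof and only indicate the mechanism that makes it the quaternionic analogue of (1). Writing $q=z_1+z_2\mathbf{j}$ with $z_1=x_0+x_1\mathbf{i}$, $z_2=x_2+x_3\mathbf{i}$ identifies $\mathbb{H}$ with $\mathbb{C}^2$, and under this identification the Cauchy--Fueter system $D(h+\mathbf{j}g)=0$ becomes a $\bar\partial$-type system in the complex coordinates $z_1,z_2$. Completing a prescribed complex-valued harmonic $h$ to a regular function $h+\mathbf{j}g$ then amounts to solving this system for $g$, whose solvability is governed by the vanishing of a Dolbeault-type cohomology group of $\Omega$; Nono's formulation ``$\Omega$ is a domain of holomorphy after a suitable change of variables'' is precisely the geometric condition ensuring this vanishing.

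The main obstacle I anticipate is the converse direction of part (1): producing, for an arbitrary non-simply-connected domain, an explicit harmonic function whose conjugate form has nontrivial period. The logarithm construction settles this cleanly once one knows $\mathbb{C}\setminus\Omega$ has a bounded component whenever the connected open set $\Omega$ fails to be simply connected, which is standard planar topology. For part (2) the essential analytic content is external to our argument, so the only real task there is to align our sign and coordinate conventions with those of \cite{Nono86} so that the two parts read as instances of a single phenomenon.
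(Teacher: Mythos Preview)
The paper does not actually prove Theorem~\ref{thm:C andH}: it is stated in the introduction as a pair of established results (part~(1) classical, part~(2) attributed to \cite{Nono86}) that the new Theorem~\ref{thm:real valued harmonic function} unifies, and no argument for either part appears anywhere in the text. So there is no paper proof to compare your proposal against.

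That said, your treatment is sound and appropriate. Your argument for part~(1) via the closed $1$-form $\omega=-h_y\,dx+h_x\,dy$ and the period obstruction $\log|z-z_0|$ is the standard one, and the topological input you flag (a planar domain is simply connected iff its complement in $\mathbb{C}$ has no bounded component) is exactly what is needed for the converse. For part~(2) you do what the paper does: cite \cite{Nono86}. Your heuristic sketch of the mechanism (rewriting $D(h+\mathbf{j}g)=0$ as a $\bar\partial$-type system under $q=z_1+z_2\mathbf{j}$, so that solvability is governed by Dolbeault-type vanishing) is a helpful gloss that goes slightly beyond what the paper offers, but it is consistent with N\={o}no's formulation and with the role Theorem~\ref{thm:C andH} plays here as motivation rather than as a result to be reproved.
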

The organization of this paper is as follows.
Section \ref{section:Preliminaries} covers the necessary background on the Cauchy–Fueter and $k$-Cauchy–Fueter operators. In Section \ref{section:L0L1}, we introduce and define the operators $L_0$ and $L_1$, and prove Theorem \ref{thm:chengfa}. Section \ref{section:A new resolution} presents a new resolution of $\mathcal{R}^{(2)}$, revealing the connection between the $2$-Cauchy–Fueter complex and the de Rham complex. The proof of the equivalent topological characterization for the $2$-Cauchy–Fueter equation (Theorem \ref{thm:real valued harmonic function}) is given in Section \ref{section:Proof of Theorem real valued harmonic function}. Finally, in Section \ref{section:Proof of Theorem inverse problem}, we prove the converse of Theorem \ref{thm:chengfa} (Theorem \ref{thm:inverse problem}).

\section{Preliminaries}\label{section:Preliminaries}

\subsection{Cauchy-Fueter operator}

  Let $\mathbb{H}$ be the quaternion space with the imaginary units $\mathbf{i,j,k}$ satisfying that
$$\mathbf{i}^2=\mathbf{j}^2=\mathbf{k}^2=\mathbf{ijk}=-1.$$
Denote an element $q$ of $\mathbb{H}$ by
$$q=x_0+x_1\mathbf{i}+x_2\mathbf{j}+x_3\mathbf{k}.$$
The conjugate of $q$ is defined by $\overline{q}=x_0-x_1\mathbf{i}-x_2\mathbf{j}-x_3\mathbf{k}$. Define the real part and  imaginary part of $q$ by
$$\operatorname{Re} q:=\frac{1}{2}(q+\bar{q})=x_0, \ \operatorname{Im} q:=x_1\mathbf{i}+x_2\mathbf{j}+x_3\mathbf{k}.$$

 The Cauchy-Fueter operator is defined as following
\begin{equation*}
    D = \frac{\partial}{\partial x_0}+\mathbf{i}\frac{\partial}{\partial x_1}+\mathbf{j}\frac{\partial}{\partial x_2}+\mathbf{k}\frac{\partial}{\partial x_3}.
\end{equation*}
Let $\Omega\subset\mathbb{R}^4$ be an open subset and $f\in C^1(\Omega,\mathbb{H})$ . If $Df=0$ on  $\Omega$, we call $f$ {\bf quaternionic regular} on $\Omega$. It is clear that $\Delta=D\overline{D}=\overline{D}D$, which implies that each component of a quaternionic regular function $f$ on $\Omega$ is harmonic.

  Let  $\mathcal{R}$ denote the sheaf of quaternionic regular functions on a domain $\Omega\subset \mathbb{R}^4$, and  $\widetilde{R}$ comprising the subsheaf of those functions whose real part vanishes.

\bigskip

The equation $Df=g$ is called Cauchy-Fueter equation. For the solution of Cauchy-Fueter equation, we have the following Lemma.
\begin{lemma}\label{lem:solution of Cauchy Fueter}\cite[Theorem 3.2.1]{CSSS04}
    Let $\Omega$ be a domain of $\mathbb{R}^4$ and $g\in C^\infty(\Omega,\mathbb{H})$. Then there exists $f\in C^\infty(\Omega,\mathbb{H})$ such that
    \begin{equation*}
        Df=g.
        \end{equation*}
        In addition, we have $H^1(\Omega,\mathcal{R})=0$.
\end{lemma}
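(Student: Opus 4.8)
The plan is to reduce the quaternionic equation $Df = g$ to a system of four real second-order equations for the Laplacian, which is always solvable on any open subset of $\mathbb{R}^4$ by classical potential theory. Concretely, I would first recall that $\overline{D}D = D\overline{D} = \Delta$, the (componentwise) Laplacian on $\mathbb{R}^4$. Given $g \in C^\infty(\Omega,\mathbb{H})$, solve $\Delta u = g$ componentwise for $u \in C^\infty(\Omega,\mathbb{H})$; this is possible because the Laplacian is surjective on $C^\infty(\Omega)$ for every open $\Omega \subseteq \mathbb{R}^n$ (e.g.\ by the Malgrange–Ehrenpreis theorem together with a partition-of-unity / exhaustion argument, or by the classical existence theory for the Poisson equation with smooth data). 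Then set $f := \overline{D} u$. Since $D$ has constant coefficients and commutes with $\overline{D}$, we get $Df = D\overline{D} u = \Delta u = g$, and $f$ is smooth. This establishes the existence statement.

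For the cohomological conclusion $H^1(\Omega,\mathcal{R}) = 0$, the approach is to exhibit the short exact sequence of sheaves
\begin{equation*}
0 \longrightarrow \mathcal{R} \longrightarrow \mathcal{E}_{\mathbb{H}} \stackrel{D}{\longrightarrow} \mathcal{E}_{\mathbb{H}} \longrightarrow 0,
\end{equation*}
where $\mathcal{E}_{\mathbb{H}}$ denotes the (fine, hence acyclic) sheaf of smooth $\mathbb{H}$-valued functions; exactness on the right is precisely the local solvability of $Df = g$, which follows a fortiori from the existence part just proved (applied on balls). Taking the long exact cohomology sequence and using $H^q(\Omega,\mathcal{E}_{\mathbb{H}}) = 0$ for $q \geqslant 1$ (fineness of $\mathcal{E}_{\mathbb{H}}$, via smooth partitions of unity), one reads off
\begin{equation*}
H^1(\Omega,\mathcal{R}) \cong \operatorname{coker}\bigl(D : C^\infty(\Omega,\mathbb{H}) \to C^\infty(\Omega,\mathbb{H})\bigr),
\end{equation*}
and the surjectivity of the global $D$ — which is exactly the existence statement — forces this cokernel to vanish. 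In fact $H^q(\Omega,\mathcal{R}) = 0$ for all $q \geqslant 1$ by the same resolution, since the complex has length one.

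The only genuine input here is the surjectivity of the Laplacian $\Delta : C^\infty(\Omega) \to C^\infty(\Omega)$ on an arbitrary open set $\Omega \subseteq \mathbb{R}^4$; everything else is formal. I expect this to be the main point to pin down carefully, although it is entirely standard (Malgrange's theorem on solvability of constant-coefficient PDE with smooth right-hand side on arbitrary open sets, or a direct construction via Newtonian potentials on an exhaustion by relatively compact sets combined with runs of the Runge-type approximation to correct the errors). Since the excerpt cites \cite[Theorem 3.2.1]{CSSS04} for this lemma, I would simply invoke that reference for the analytic core and present the factorization $f = \overline{D}u$ together with the resolution argument above as the structural explanation.
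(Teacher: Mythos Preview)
Your argument is correct. The factorization $f=\overline{D}u$ with $\Delta u=g$ is the standard way to prove surjectivity of $D$, and your sheaf-theoretic deduction of $H^1(\Omega,\mathcal{R})=0$ from the two-term fine resolution is clean and valid.

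As for comparison: the paper does not actually prove this lemma. It is stated with a citation to \cite[Theorem~3.2.1]{CSSS04} and used as a black box thereafter. So there is nothing to compare against; your write-up supplies exactly the kind of self-contained justification one would want if the citation were unavailable. The only remark is that your closing sentence ``$H^q(\Omega,\mathcal{R})=0$ for all $q\geqslant 1$'' slightly overshoots what the lemma claims (it only asserts $q=1$), but it is of course true for the same reason and does no harm.
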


\subsection{$k$-Cauchy-Fueter operators}

 As many properties of the $k$-Cauchy–Fueter equation and $k$-regular functions have already been investigated by Wang \cite{Wang10,Wang15,Wang19,Wang25,KW13}, here we briefly review the definition of the $k$-Cauchy–Fueter operator, along with basic notations and concepts such as the $k$-Cauchy–Fueter complex, $k$-regular functions.

We embed the quaternions ring $\mathbb{H}$ into the ring of complex $2\times 2$ matrices via
\begin{equation}\label{eq:z AAprime}
\tau(q):=\left(\begin{array}{cc}
    x_0+ x_1i & -x_2- x_3i  \\
     x_2- x_3i& x_0- x_1i
\end{array}\right)=
\left(\begin{array}{cc}
    \overline{z^{00^\prime}} & \overline{z^{01^\prime}}  \\
    \overline{z^{10^\prime}} & \overline{z^{11^\prime}}
\end{array}\right).
\end{equation}
This yields  operators  $\nabla_{AA^\prime}$,   defined by
\begin{equation}\label{eq:def of nabla AAprime}
  \left(\begin{array}{ll}
  \nabla_{00^\prime}   & \nabla_{01^\prime}\\
  \nabla_{10^\prime} & \nabla_{11^\prime}
  \end{array}\right):=2\left(\begin{array}{cc}
    \frac{\partial}{{\partial z^{00^\prime}}} & \frac{\partial}{{\partial z^{01^\prime}}}  \\
    \frac{\partial}{{\partial z^{10^\prime}}} & \frac{\partial}{{\partial z^{11^\prime}}}
\end{array}\right)=
\left(
\begin{array}{ll}
\dfrac{\partial}{\partial  x_{0} }+i\dfrac{\partial}{\partial x_{1}}   & -\dfrac{\partial}{\partial  x_{2} }-i\dfrac{\partial}{\partial x_{3}} \\
 \dfrac{\partial}{\partial  x_{2} }-i\dfrac{\partial}{\partial x_{3}} &\  \ \ \dfrac{\partial}{\partial  x_{0} }-i\dfrac{\partial}{\partial x_{1}}
\end{array}
\right).
\end{equation}

We can write smooth quaternions valued functions $u,f$ as $u=u_0+\mathbf{j}u_1$ and $f=f_0+\mathbf{j}f_1$ for complex valued functions $u_0,u_1,f_0$ and $f_1$. Then the Cauchy-Fueter equation
$$Du=f$$
is equivalent to the following equation
\begin{equation*}
    \left(\begin{array}{ll}
  \nabla_{00^\prime}   & \nabla_{01^\prime}\\
  \nabla_{10^\prime} & \nabla_{11^\prime}
  \end{array}\right)
  \left(\begin{array}{l}
       u_0 \\
       u_1
  \end{array}\right)=
  \left(\begin{array}{l}
       f_0 \\
       f_1
  \end{array}\right).
\end{equation*}

By raising  indices, we introduce derivatives
$$\nabla^{A^\prime}_{A}:=\sum_{B^\prime=0, 1} \nabla_{AB^\prime} \ \epsilon^{B^\prime A^\prime},$$
  where
 \begin{equation}\label{eq:def epsilon}
\epsilon=(\epsilon^{B^\prime A^\prime})=\left(\begin{array}{cc}
  0 & -1 \\
  1 & 0
\end{array}\right)=\tau(\mathbf{j}).
\end{equation}
 More precisely, we have
\begin{equation}\label{preliminary:nabla AA prime}
  \left(\begin{array}{ll}
  \nabla_{0}^{0^\prime}   & \nabla_{0}^{1^\prime}\\
  \nabla_{1}^{0^\prime} & \nabla_{1}^{1^\prime}
  \end{array}\right)=
\left(
\begin{array}{ll}
-\dfrac{\partial}{\partial x_{2}}-i\dfrac{\partial}{\partial x_{3}} & -\dfrac{\partial}{\partial x_{0}}-i\dfrac{\partial}{\partial x_{1}} \\
\ \ \ \dfrac{\partial}{\partial x_{0}}-i\dfrac{\partial}{\partial x_{1}}   & -\dfrac{\partial}{\partial x_{2}}+i\dfrac{\partial}{\partial x_{3}}
\end{array}
\right).
\end{equation}

Before the introduction of $k$-Cauchy-Fueter operator, we first establish some necessary notations. An element of $\mathbb{C}^2$ is denoted by
$$(f_{A^\prime}), \qquad {A^\prime=0^\prime,1^\prime},$$   where $f_{A^\prime}\in\mathbb{C}$. The symmetric power $\odot^k\mathbb{C}^2$ is a subspace of $\otimes^k\mathbb{C}^2$ whose element can be expressed by  \begin{equation*}
(f_{ A_0^\prime\cdots A_{k-1}^\prime}), \qquad A_0^\prime,\ldots,A_{k-1}^\prime=0,1,
  \end{equation*}
  where $f_{A_0^\prime\ldots A_{k-1}^\prime}\in\mathbb{C}$ are invariant under the permutation of subscripts, i.e.
$$f_{A_0^\prime\ldots A_{k-1}^\prime}=f_{A^\prime_{\sigma(0)}\ldots A^\prime_{\sigma(k-1)}}$$
for any $\sigma$ in the group $S_k$ of permutations of $k$ letters.

For the specific case $k=0$, we write $\odot^0\mathbb{C}^2\cong \mathbb{C}$.

An element of $\mathbb{C}^2\otimes\odot^k\mathbb{C}^2$ is denoted by
\begin{equation*}
(f_{ AA_0^\prime\cdots A_{k-1}^\prime}),\qquad  A,A_0^\prime,\ldots,A_{k-1}^\prime=0,1,
\end{equation*}
 where $f_{AA_0^\prime\ldots A_{k-1}^\prime}\in\mathbb{C}$ are invariant under the permutation of subscripts $A_0^\prime\ldots A_{k-1}^\prime$.

\begin{example}
Let $k=2$. An element $f\in \odot^2\mathbb{C}^2$ is formally given by a vector $(f_{0^\prime0^\prime},f_{0^\prime1^\prime},f_{1^\prime0^\prime},f_{1^\prime1^\prime})$ which satisfies  $f_{0^\prime1^{\prime}}=f_{1^\prime0^\prime}$.
\end{example}

Now we introduce the $k$-Cauchy-Fueter operators
\begin{equation*}
\begin{aligned}
  \mathscr{D}^{(k)}: & C^\infty(\mathbb{R}^{4},\odot^k\mathbb{C}^2)\longrightarrow C^\infty(\mathbb{R}^{4},\mathbb{C}^{2}\otimes\odot^{k-1}\mathbb{C}^2),
\end{aligned}
\end{equation*}
defined by
 \begin{equation}\label{def:kCF}
   \left(\mathscr{D}^{(k)}f\right)_{AA_0^\prime\cdots A_{k-2}^\prime} :=
  \begin{cases} \sum\limits_{A^\prime=0^\prime,1^\prime}\nabla^{A^\prime}_Af_{A^\prime},  & \qquad k=1,\\  \\ \sum\limits_{A^\prime=0^\prime,1^\prime}\nabla^{A^\prime}_Af_{A^\prime A_0^\prime\cdots A_{k-2}^\prime}, & \qquad   k\geqslant2.
  \end{cases}
\end{equation}
Let $\Omega\subset\mathbb{R}^4$ be an open subset and $f\in C^1(\Omega,\odot^k\mathbb{C}^2)$ . If $\mathscr{D}^{(k)}f=0$ on  $\Omega$, we call $f$ {\bf k-regular} on $\Omega$.  If $f$ is harmonic on $\Omega$, then we call $f$ $0$-regular on $\Omega$.

\begin{example}
$(1)$  Let $\Omega\subset\mathbb{R}^4$ be an open subset. Then function $f\in C^1(\Omega,\mathbb{C}^2)$ is a $1$-regular function on $\Omega$ if and only if
\begin{equation}\label{eq:def of tau}
\tau(f):=\mathbf{j}(f_0+\mathbf{j}f_1)
\end{equation}
is a quaternionic regular function on $\Omega$.

$(2)$ We can provide an explicit expression for $2$-regular functions. If $f\in C^\infty(\Omega,\odot^2\mathbb{C}^2)$ is $2$-regular, then $f$ satisfies the following equations
\begin{equation*}
\begin{aligned}
&\nabla_{0}^{0^\prime}f_{0^\prime0^\prime}+\nabla_{0}^{1^\prime}f_{1^\prime0^\prime}=0,\qquad \nabla_{1}^{0^\prime}f_{0^\prime0^\prime}+\nabla_{1}^{1^\prime}f_{1^\prime0^\prime}=0,\\
&\nabla_{0}^{0^\prime}f_{0^\prime1^\prime}+\nabla_{0}^{1^\prime}f_{1^\prime1^\prime}=0,\qquad \nabla_{1}^{0^\prime}f_{0^\prime1^\prime}+\nabla_{1}^{1^\prime}f_{1^\prime1^\prime}=0, \qquad f_{0^\prime1^\prime}=f_{1^\prime0^\prime}.
\end{aligned}
\end{equation*}

$(3)$ Consider a quaternionic regular function $F=F_1\mathbf{i}+F_2\mathbf{j}+F_3\mathbf{k}$ with real-valued components $F_1,F_2,F_3$. We define a smooth function $f\in C^\infty(\Omega,\odot^2\mathbb{C}^2)$ by
\[f_{0^\prime0^\prime}=F_2-iF_3, \qquad  f_{0^\prime1^\prime}=f_{1^\prime0^\prime}=-iF_1,   \qquad f_{1^\prime1^\prime}=F_2+iF_3.\]
Then, $f$ is 2-regular.
\end{example}

\bigskip

Let $k\geqslant2$. Denote by $\mathcal{R}^{(k)}$ the sheaf of $k$-regular functions over domain $\Omega\subset\mathbb{R}^4$. Consider the complex vector spaces
$$V^{(k)}_0=\odot^k\mathbb{C}^2,\qquad V_1^{(k)}=\odot^{k-1}\mathbb{C}^2\otimes \mathbb{C}^2,\qquad V_2^{(k)}=\odot^{k-2}\mathbb{C}^2\otimes \Lambda^2\mathbb{C}^2\cong\odot^{k-2}\mathbb{C}^2.$$
For fixed $\alpha=0,1,2$, denote by $E_\alpha^{(k)}$ the trivial bundle over $\Omega$ whose fiber is  $V_\alpha^{(k)}$. Denote by $\mathcal{V}_\alpha^{(k)}$ the  sheaf of $E_\alpha^{(k)}$. There is a acyclic resolution of the sheaf  $\mathcal{R}^{(k)}$ \cite{Wang10}:
\begin{equation}\label{eq:resolution of k CF}
\begin{split}
0\rightarrow \mathcal{R}^{(k)}\xrightarrow {i}\mathcal{V}_0^{(k)} &
\xrightarrow{{\mathscr D}^{(k)}}\mathcal{V}_1^{(k)}
\xrightarrow{{\mathscr D}_1^{(k)}}\mathcal{V}_2^{(k)}\longrightarrow 0 .\end{split}
\end{equation}
Here $i$ is the embedding operator and the operator
$$\mathscr{D}_1^{(k)}  : C^\infty(\mathbb{R}^{4},\mathbb{C}^2\otimes\odot^{k-1}\mathbb{C}^2)\longrightarrow  C^\infty(\mathbb{R}^{4},\odot^{k-2}\mathbb{C}^2),$$
is defined by
\begin{equation}\label{eq:def of D1}
(\mathscr{D}_1^{(k)}f)_{A_0^\prime\ldots A_{k-3}^\prime}=\begin{cases} \sum\limits_{A^\prime=0^\prime,1^\prime}(\nabla^{A^\prime}_0f_{1A^\prime}-\nabla^{A^\prime}_1f_{0A^\prime}),  & \qquad k=2,\\  \\ \sum\limits_{A^\prime=0^\prime,1^\prime}(\nabla^{A^\prime}_0f_{1A^\prime A_0^\prime\cdots A_{k-3}^\prime}-\nabla^{A^\prime}_1f_{0A^\prime A_0^\prime\cdots A_{k-3}^\prime}), & \qquad   k\geqslant3.
  \end{cases}
\end{equation}

Since the resolution \eqref{eq:resolution of k CF} is acyclic, we have
$$H^1(\Omega,\mathcal{R}^{(k)})\cong\frac{\mathop{Ker}\mathscr{D}_1^{(k)}}{\mathop{Im}\mathscr{D}^{(k)}}. $$

\section{The operators $L_0$ and $L_1$}\label{section:L0L1}

 In this section, we introduce a pair of multiplication-like operator $L_0,L_1$ and provide a proof of Theorem \ref{thm:chengfa}.

Sudbery \cite{Sudbery79} observed that if $f$ is a quaternionic regular, then $qf$ is harmonic. Similarly, for a $1$-regular function
$(f_0,f_1)$,  the function $q\mathbf{j}\cdot (f_0+\mathbf{j}f_1)$ is also harmonic. Consider the embedding of $-q\mathbf{j}$ from $\mathbb{H}$ to $\mathbb{C}^{2\times 2}$, defined by
\begin{equation}\label{eq:def:zAAprime}
\tau(-q\mathbf{j})=\tau(x_2+x_3\mathbf{i}-x_0\mathbf{j}-x_1\mathbf{k})=\left(
\begin{array}{cc}
    x_2+ x_3 i & x_0+ x_1i \\
    -x_0+ x_1i & x_2- x_3i
\end{array}
\right)=\left(\begin{array}{cc}
   z^{10^\prime} & z^{11^\prime} \\
   -z^{00^\prime} & -z^{01^\prime}
\end{array}\right).
\end{equation}
We denote the matrix $\tau(-q\mathbf{j})$ by $(z_A^{A^\prime})$, where each element $z_A^{A^\prime}$ satisfies the equation
\begin{equation}\label{eq:zAAprime calculation}
  z^{AA^\prime}=\sum_{B=0,1}\epsilon^{AB}z_B^{A^\prime}.
\end{equation}
Here $\epsilon$ denote the matrix defined in equation \eqref{eq:def epsilon}.

  Then functions $z_0^{0^\prime}f_0+z_0^{1^\prime}f_1$ and $z_1^{0^\prime}f_0+z_1^{1^\prime}f_1$ are harmonic whenever $(f_0,f_1)$ is $1$-regular. Motivated by these facts, we define a pair of operators as follows:
\begin{equation}\label{eq:def of L0L1}
\begin{aligned}
    L_0(f_0,f_1)&:=z_0^{0^\prime}f_0+z_0^{1^\prime}f_1=(x_2+x_3i)f_0+(x_0+x_1i)f_1,\\
    L_1(f_0,f_1)&:=z_1^{0^\prime}f_0+z_1^{1^\prime}f_1=(-x_0+x_1i)f_0+(x_2-x_3i)f_1.
    \end{aligned}
\end{equation}

 The definition of operators $L_0$ and $L_1$ can be naturally extended to the spaces $C^\infty(\Omega,\odot^k\mathbb{C}^2)$ and $C^\infty(\Omega,\mathbb{C}^2\otimes\odot^k\mathbb{C}^2)$.
\begin{definition}\label{def:L_0L_1 on k-CF}
    Let $\Omega$ be a domain of $\mathbb{R}^4$. Operators $L_0,L_1$ are defined on $C^\infty(\Omega,\odot^k\mathbb{C}^2)$ as follows:
\begin{equation*}
    \begin{aligned}
&L_0,L_1:C^\infty(\Omega,\odot^k\mathbb{C}^{2})\longrightarrow C^\infty(\Omega,\odot^{k-1}\mathbb{C}^2),\\
&(L_0f)_{A^\prime_0\ldots A^\prime_{k-2}}:=z_0^{0^\prime} f_{0^\prime A^\prime_0\ldots  A_{k-2}^\prime}+z_0^{1^\prime} f_{1^\prime A^\prime_0\ldots  A_{k-2}^\prime},\\
&(L_1f)_{A^\prime_0\ldots A^\prime_{k-2}}:=z_1^{0^\prime} f_{0^\prime A^\prime_0\ldots  A_{k-2}^\prime}+z_1^{1^\prime} f_{1^\prime A^\prime_0\ldots  A_{k-2}^\prime}.
    \end{aligned}
\end{equation*}
Similarly, on the space $C^\infty(\Omega,\mathbb{C}^2\otimes\odot^k\mathbb{C}^2 )$, operators $L_0,L_1$ are defined by:
\begin{equation*}
    \begin{aligned}
&L_0,L_1:C^\infty(\Omega,\mathbb{C}^2\otimes\odot^k\mathbb{C}^{2})\longrightarrow C^\infty(\Omega,\mathbb{C}^2\otimes\odot^{k-1}\mathbb{C}^2),\\
&(L_0f)_{AA^\prime_0\ldots A^\prime_{k-2}}:=z_0^{0^\prime} f_{A0^\prime A^\prime_0\ldots  A_{k-2}^\prime}+z_0^{1^\prime} f_{A1^\prime A^\prime_0\ldots  A_{k-2}^\prime},\\
&(L_1f)_{AA^\prime_0\ldots A^\prime_{k-2}}:=z_1^{0^\prime} f_{A0^\prime A^\prime_0\ldots  A_{k-2}^\prime}+z_1^{1^\prime} f_{A1^\prime A^\prime_0\ldots  A_{k-2}^\prime}.
    \end{aligned}
\end{equation*}
\end{definition}

The following properties of operators $L_0$ and $L_1$ are important.
 \begin{proposition}\label{prop:L0L1}
 Operator $L_0$ and $L_1$ satisfy the following properties:
\begin{enumerate}
\item $L_0L_1=L_1L_0$.
\item For $j=0,1$ and any $k\geqslant 2$, the following intertwining relation holds:
\[\mathscr{D}^{(k-1)}L_j=L_j\mathscr{D}^{(k)}.\]
\end{enumerate}
 \end{proposition}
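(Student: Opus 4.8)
\textbf{Proof proposal for Proposition \ref{prop:L0L1}.}

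The plan is to verify both identities by direct computation using the explicit coordinate formulas for $\nabla_A^{A'}$ in \eqref{preliminary:nabla AA prime} and for $z_A^{A'}$ in \eqref{eq:def:zAAprime}, together with the symmetry of the indices. For part (1), the operators $L_0$ and $L_1$ are both multiplication by functions (namely the entries of $\tau(q\mathbf{j})$) followed by a contraction of one primed index. Since multiplication operators on scalar-valued functions commute, the only thing to check is that the two successive contractions — one producing the index slot filled by $z_0^{B'}$ and the other by $z_1^{C'}$ — can be interchanged. Writing out $(L_0L_1 f)_{A_0'\cdots A_{k-3}'} = \sum_{B',C'} z_0^{B'} z_1^{C'} f_{C'B'A_0'\cdots A_{k-3}'}$ and comparing with $(L_1 L_0 f)_{A_0'\cdots A_{k-3}'} = \sum_{B',C'} z_1^{B'} z_0^{C'} f_{C'B'A_0'\cdots A_{k-3}'}$, one relabels $B'\leftrightarrow C'$ in the second sum and invokes the symmetry $f_{C'B'\cdots} = f_{B'C'\cdots}$ (which holds because $f$ takes values in the symmetric power $\odot^k\mathbb{C}^2$); this gives equality. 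The same argument works verbatim on $C^\infty(\Omega,\mathbb{C}^2\otimes\odot^k\mathbb{C}^2)$, where the unprimed slot $A$ is a spectator.

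For part (2), the intertwining relation $\mathscr{D}^{(k-1)}L_j = L_j\mathscr{D}^{(k)}$, I would expand both sides componentwise. On the left, $(\mathscr{D}^{(k-1)}L_j f)_{A A_0'\cdots A_{k-3}'} = \sum_{A'}\nabla_A^{A'}(L_j f)_{A'A_0'\cdots A_{k-3}'} = \sum_{A',B'}\nabla_A^{A'}\bigl(z_j^{B'} f_{B'A'A_0'\cdots A_{k-3}'}\bigr)$, and by the Leibniz rule this splits into a term where $\nabla_A^{A'}$ hits $z_j^{B'}$ and a term where it hits $f$. On the right, $(L_j\mathscr{D}^{(k)}f)_{AA_0'\cdots A_{k-3}'} = \sum_{B'} z_j^{B'}(\mathscr{D}^{(k)}f)_{AB'A_0'\cdots A_{k-3}'} = \sum_{A',B'} z_j^{B'}\nabla_A^{A'} f_{A'B'A_0'\cdots A_{k-3}'}$, which matches the Leibniz term of the left-hand side where $\nabla$ hits $f$ (after using symmetry $f_{B'A'\cdots}=f_{A'B'\cdots}$). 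Thus the identity reduces to showing that the remaining term vanishes:
\begin{equation*}
\sum_{A',B'} \bigl(\nabla_A^{A'} z_j^{B'}\bigr)\, f_{B'A'A_0'\cdots A_{k-3}'} = 0.
\end{equation*}
This is where the real content lies. The key observation is that $\nabla_A^{A'} z_j^{B'}$, as a matrix in the indices $(A',B')$ for fixed $A,j$, is \emph{antisymmetric}: a glance at \eqref{preliminary:nabla AA prime} and \eqref{eq:def:zAAprime} shows $\nabla_A^{A'} z_j^{A'}$ (no sum) equals the appropriate diagonal entry, and in fact $\nabla_0^{0'}z_0^{0'} = \partial_{x_2}$-type expression... more precisely one computes the $2\times 2$ array $(\nabla_A^{A'}z_j^{B'})_{A',B'}$ and finds it is a scalar multiple of $\epsilon^{A'B'}$, hence antisymmetric in $A',B'$. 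Contracting an antisymmetric tensor in $A',B'$ against $f_{B'A'\cdots}$, which is symmetric in $A',B'$, yields zero. I would carry this out by explicitly evaluating the four entries $\nabla_A^{A'}z_j^{B'}$ for each of the four choices of $(A,j)$ and checking in each case that the diagonal entries vanish and the off-diagonal entries are negatives of one another.

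The main obstacle is purely bookkeeping: keeping the primed and unprimed indices straight through the Leibniz expansion and correctly computing the $2\times 2$ table of derivatives $\nabla_A^{A'}z_j^{B'}$ from the sign conventions in \eqref{preliminary:nabla AA prime} and \eqref{eq:def:zAAprime}. There is no conceptual difficulty — the statement is essentially the assertion that $\mathscr{D}^{(k)}$ is built from constant-coefficient operators whose symbol, contracted against the "multiplier symbol" of $L_j$, produces an antisymmetric-times-symmetric pairing — but one must be careful that the Leibniz cross-terms match up exactly, including on the space $C^\infty(\Omega,\mathbb{C}^2\otimes\odot^{k}\mathbb{C}^2)$ where $L_j$ contracts a primed index sitting in the second tensor factor while the $\nabla_A^{A'}$ in $\mathscr{D}^{(k)}$ acts through the first. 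I would first do the case $k=2$ of part (2) completely (where $\odot^{k-2}\mathbb{C}^2 = \mathbb{C}$ so the formulas are shortest) to fix the computation, then note that the general $k$ case is identical with inert spectator indices $A_0'\cdots A_{k-3}'$ carried along.
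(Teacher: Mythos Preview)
Your proposal is correct and follows essentially the same approach as the paper: for (1) both you and the paper expand the double contraction and invoke the symmetry of $f$; for (2) both apply Leibniz and reduce to the vanishing of $\sum_{A',B'}(\nabla_A^{A'}z_j^{B'})f_{B'A'\cdots}$, which the paper handles via the explicit formula $\nabla_A^{B'}z_j^{k'}=2\epsilon_{Aj}^{Bk}$ (your ``scalar multiple of $\epsilon^{A'B'}$'' observation) contracted against the symmetric $f$. The only cosmetic difference is that the paper writes out $k=2$ first and then carries the spectator indices, exactly as you planned.
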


Before presenting the proof, we first state a useful lemma for subsequent computations.
\begin{lemma}\label{lem:computation z}\cite[Lemma 3.3]{Wang17}
\begin{enumerate}
\item $\nabla_{AA^\prime} z^{BB^\prime}=2\delta_A^{B}\delta_{A^\prime}^{B^\prime}$.
\item $\nabla_A^{A^\prime}z_{B}^{B^\prime}=-2\epsilon^{B^\prime A^\prime}\epsilon^{BA}$.
\end{enumerate}
\end{lemma}
\begin{proof}
The assertion $(1)$ corresponds precisely to Lemma $3.3$ in \cite{Wang17}. Thus, we only need to prove assertion $(2)$. From the definitions of $z_A^{A^\prime}$ (see \eqref{eq:zAAprime calculation}) and $\nabla_A^{A^\prime}$ (see \eqref{preliminary:nabla AA prime}), we recall that
\[
\nabla_A^{A^\prime}z_{B}^{B^\prime}=\sum_{C^\prime,D=0,1}\nabla_{AC^\prime}\left(\epsilon^{C^\prime A^\prime}(\epsilon^{-1})^{ BD }z^{DB^\prime}\right).
\]
By the definition of the matrix $\epsilon$ (see \eqref{eq:def epsilon}), we have $\epsilon^{-1}=-\epsilon$. Utilizing assertion $(1)$, we derive the following:
\begin{align*}
\nabla_A^{A^\prime}z_{B}^{B^\prime}&=-\sum_{C^\prime,D=0,1}\left(\nabla_{AC^\prime}z^{DB^\prime}\right)\epsilon^{C^\prime A^\prime}\epsilon^{BD}\\
&= -2\sum_{C^\prime,D=0,1}\delta_A^D\delta^{B^\prime}_{C^\prime}\epsilon^{C^\prime A^\prime}\epsilon^{BD}= -2\epsilon^{B^\prime A^\prime}\epsilon^{BA}.
\end{align*}
\end{proof}

\emph{Proof of Proposition \ref{prop:L0L1}.}
We first prove part $(1)$. Let $f\in C^\infty(\Omega,\odot^2\mathbb{C}^2)$. Then we have
\begin{equation}\label{eq:prop L0L1:proof of 1:1}
\begin{aligned}
    L_0L_1f&=z_0^{0^\prime}(L_1f)_{0^\prime}+z_0^{1^\prime}(L_1f)_{1^\prime}\\
    &=z_0^{0^\prime}(z_1^{0^\prime} f_{0^\prime0^\prime}+z_1^{1^\prime} f_{1^\prime0^\prime}) +z_0^{1^\prime}(z_1^{0^\prime} f_{0^\prime1^\prime}+z_1^{1^\prime} f_{1^\prime1^\prime})\\
    &=z_1^{0^\prime}(z_0^{0^\prime}f_{0^\prime0^\prime}+z_0^{1^\prime}f_{1^\prime0^\prime})+z_{1}^{1^\prime}(z_0^{0^\prime}f_{0^\prime1^\prime}+z_0^{1^\prime}f_{1^\prime1^\prime})\\
    &=z_1^{0^\prime}(L_0f)_{0^\prime}+z_1^{1^\prime}(L_0f)_{1^\prime}=L_1L_0f.
    \end{aligned}
\end{equation}
This establishes the result for the case $k=2$.

The proof of cases $k\geqslant 3$ and $f\in C^\infty(\Omega,\odot^k\mathbb{C}^2)$ follows from the applications of \eqref{eq:prop L0L1:proof of 1:1} on the term $(L_0L_1 f)_{A_0^\prime\ldots A_{k-3}^\prime}$. More precisely, we have
\begin{equation*}
\begin{aligned}
    (L_0L_1f)_{A_0^\prime\ldots A_{k-3}^\prime}&=z_0^{0^\prime}(z_1^{0^\prime} f_{0^\prime0^\prime A_0^\prime\ldots A_{k-3}^\prime}
    +z_1^{1^\prime} f_{1^\prime0^\prime A_0^\prime\ldots A_{k-3}^\prime})\\
    &\quad +z_0^{1^\prime}(z_1^{0^\prime} f_{0^\prime1^\prime A_0^\prime\ldots A_{k-3}^\prime}+z_1^{1^\prime} f_{1^\prime1^\prime A_0^\prime\ldots A_{k-3}^\prime})\\
    &=z_1^{0^\prime}(z_0^{0^\prime}f_{0^\prime0^\prime A_0^\prime\ldots A_{k-3}^\prime}+z_0^{1^\prime}f_{1^\prime0^\prime A_0^\prime\ldots A_{k-3}^\prime})\\
    &\quad +z_{1}^{1^\prime}(z_0^{0^\prime}f_{0^\prime1^\prime A_0^\prime\ldots A_{k-3}^\prime}
    +  z_0^{1^\prime}f_{1^\prime1^\prime A_0^\prime\ldots A_{k-3}^\prime})\\
    &=(L_1L_0f)_{A_0^\prime\ldots A_{k-3}^\prime}.
    \end{aligned}
\end{equation*}
 The proof for the cases $k\geqslant 2$ and $f\in C^\infty(\Omega,\mathbb{C}^2\otimes\odot^k\mathbb{C}^2)$ proceeds similarly.

\bigskip

Now we prove the assertion $(2)$. Consider the case $k=2$ firstly. Assume that
$$f\in C^\infty(\Omega,\odot^2\mathbb{C}^2).$$
 A direct computation shows that
\begin{equation}\label{eq:prop L0L1:proof of 2:1}
    \begin{aligned}
   (\mathscr{D}^{(1)}(L_jf))_A&=\nabla_A^{0^\prime}(L_jf)_{0^\prime}+\nabla_A^{1^\prime}(L_jf)_{1^\prime}\\
   &=\nabla_A^{0^\prime}(z_j^{0^\prime}f_{0^\prime0^\prime}+z_j^{1^\prime}f_{1^\prime0^\prime})+\nabla_A^{1^\prime}(z_j^{0^\prime}f_{0^\prime1^\prime}+z_j^{1^\prime}f_{1^\prime1^\prime}).
    \end{aligned}
\end{equation}
Here $j=0,1$.
Then equations \eqref{eq:prop L0L1:proof of 2:1} and Lemma \ref{lem:computation z} imply that
\begin{equation*}
\begin{aligned}
     (\mathscr{D}^{(1)}(L_jf))_A&=z_j^{0^\prime}\nabla_A^{0^\prime}f_{0^\prime0^\prime}+z_j^{1^\prime}\nabla_A^{0^\prime}f_{1^\prime0^\prime} +z_j^{0^\prime}\nabla_A^{1^\prime}f_{0^\prime1^\prime}+z_j^{1^\prime}\nabla_A^{1^\prime}f_{1^\prime1^\prime}\\
     &\quad -2\epsilon^{jA}\epsilon^{10}f_{1^\prime0^\prime}-2\epsilon^{jA}\epsilon^{01}f_{0^\prime1^\prime}\\
     &=z_j^{0^\prime}\nabla_A^{0^\prime}f_{0^\prime0^\prime}+z_j^{1^\prime}\nabla_A^{0^\prime}f_{1^\prime0^\prime} +z_j^{0^\prime}\nabla_A^{1^\prime}f_{0^\prime1^\prime}+z_j^{1^\prime}\nabla_A^{1^\prime}f_{1^\prime1^\prime}\\
     &=z_j^{0^\prime}(\mathscr{D}^{(2)}f)_{A0^\prime}+z_j^{1^\prime}(\mathscr{D}^{(2)}f)_{A1^\prime}=(L_j\mathscr{D}^{(2)}f)_A.
     \end{aligned}
\end{equation*}
This proves the result for the case $k=2$.

Now we assume that $f\in C^\infty(\Omega,\odot^k\mathbb{C}^2)$ for some $k>2$. It follows from direct computation that
\begin{equation*}
\begin{aligned}
    (\mathscr{D}^{(k-1)}(L_jf))_{AA_0^\prime\ldots A_{k-3}^\prime}&=z_j^{0^\prime}\nabla_A^{0^\prime}f_{0^\prime0^\prime A_0^\prime\ldots A_{k-3}^\prime}+z_j^{1^\prime}\nabla_A^{0^\prime}f_{1^\prime0^\prime A_0^\prime\ldots A_{k-3}^\prime} \\
    &\quad +z_j^{0^\prime}\nabla_A^{1^\prime}f_{0^\prime1^\prime A_0^\prime\ldots A_{k-3}^\prime}+z_j^{1^\prime}\nabla_A^{1^\prime}f_{1^\prime1^\prime A_0^\prime\ldots A_{k-3}^\prime}\\
    &\quad -2\epsilon^{jA}\epsilon^{10}f_{1^\prime0^\prime A_0^\prime\ldots A_{k-3}^\prime}-2\epsilon^{jA}\epsilon^{01}f_{0^\prime1^\prime A_0^\prime\ldots A_{k-3}^\prime}\\
    &=z_j^{0^\prime}(\mathscr{D}^{(k)}f)_{A0^\prime A_0^\prime\ldots A_{k-3}^\prime}+z_j^{1^\prime}(\mathscr{D}^{(k)}f)_{A1^\prime A_0^\prime\ldots A_{k-3}^\prime}\\
    &=(L_j\mathscr{D}^{(k)}f)_{A A_0^\prime\ldots A_{k-3}^\prime}.\\
    \end{aligned}
\end{equation*}
Here $A_0^\prime,\ldots, A_{k-3}^\prime=0^\prime,1^\prime$. This completes the proof.\qed

 We conclude this section by proving Theorem \ref{thm:chengfa}.

 \emph{Proof of Theorem \ref{thm:chengfa}.} We begin with the case where $k>0$ and $f,g$ are $(k+1)$-regular functions. It follows from Proposition \ref{prop:L0L1} that
\[\mathscr{D}^{(k)}(L_0f+L_1g)=L_0\mathscr{D}^{(k+1)}f+L_1\mathscr{D}^{(k+1)}g=0.\]
This proves the case $k>0$. The proof of case $k=0$ follows directly from the results of Sudbery \cite{Sudbery79}. \qed

\section{A new resolution of sheaf $\mathcal{R}^{(2)}$}\label{section:A new resolution}
In this section, we present a new resolution of the sheaf $\mathcal{R}^{(2)}$ to  reveal  the relationship between its cohomology groups and de Rham cohomology. Before constructing the resolution, we first establish the necessary notations and preliminary lemmas.

Let $\ast$ denote the Hodge star operator associated with the Euclidean metric. A complex valued $2$-form $f$ is said to be self-dual if $\ast f=f$ and anti-self-dual if $\ast f=-f$. Let $\Lambda^2_+$ and $\Lambda^2_-$ be the vector bundles of self-dual and anti-self-dual forms over $\mathbb{R}^4$, respectively. We focus on the anti-self-dual bundle $\Lambda^2_-$, which is a trivial rank-3 bundle over $\mathbb{R}^4$. A trivialization is given by the following global sections:
\begin{equation}\label{eq:three section of LJ}
   dx_0\wedge dx_1-dx_2\wedge dx_3,\qquad dx_0\wedge dx_2+dx_1\wedge dx_3,\quad \mbox{and} \quad dx_0\wedge dx_3-dx_1\wedge dx_2.
\end{equation}

The following lemma is crucial for this article.

\begin{lemma}\label{lem:essential}
 Let $U\subset \mathbb{R}^4$ be a domain. If $H^3(U,\mathbb{R})=0$, then for any complex valued closed $3$-form $g$ on $U$, there exists $f\in C^\infty(U,\LJ)$ such that $df=g$.
\end{lemma}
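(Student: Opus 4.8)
The plan is to solve $df=g$ with the extra constraint that $f$ is anti-self-dual, by first finding an arbitrary primitive and then correcting it. Since $H^3(U,\mathbb R)=0$ and $g$ is a closed (complex-valued) $3$-form, the de Rham theorem (applied componentwise to the real and imaginary parts) yields some $\eta\in C^\infty(U,\Lambda^2\mathbb C)$ with $d\eta=g$. Decompose $\eta=\eta_++\eta_-$ into its self-dual and anti-self-dual parts with respect to the Euclidean Hodge star. The anti-self-dual part $\eta_-$ is already a section of $\Lambda^2_-$; the obstruction is the self-dual part $\eta_+$, and I want to show that $\eta_+$ can be absorbed, i.e.\ that $d\eta_+$ is itself $d$ of an anti-self-dual $2$-form (after possibly modifying $\eta_-$ by a closed form). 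Equivalently, it suffices to prove: \emph{every closed $3$-form on $U$ is $d$ of a section of $\Lambda^2_-$.}

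So I would reduce to the following local-to-global statement. First, the claim is true locally: on a ball, a closed $3$-form $g$ equals $d\alpha$ for some $2$-form $\alpha$ by the Poincar\'e lemma, and I must further arrange $\alpha$ anti-self-dual. For this I use the explicit structure: write $g = h_1\, dx_1\wedge dx_2\wedge dx_3 - h_2\, dx_0\wedge dx_2\wedge dx_3 + h_3\, dx_0\wedge dx_1\wedge dx_3 - h_4\, dx_0\wedge dx_1\wedge dx_2$; then $dg=0$ says $\partial_0 h_1+\partial_1 h_2+\partial_2 h_3+\partial_3 h_4=0$, which is exactly the statement that $(h_1,h_2,h_3,h_4)$ is in the image of the divergence-type operator dual to $d$ on $\Lambda^2_-$. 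Using the three trivializing sections in \eqref{eq:three section of LJ}, an anti-self-dual $2$-form is $f=u(dx_0\wedge dx_1-dx_2\wedge dx_3)+v(dx_0\wedge dx_2+dx_1\wedge dx_3)+w(dx_0\wedge dx_3-dx_1\wedge dx_2)$, and computing $df$ one gets a first-order system in $u,v,w$ whose solvability (for $g$ closed) should follow either from a direct integration or, more conceptually, from the fact that $d\colon C^\infty(\Lambda^2_-)\to C^\infty(\Lambda^3)$ has the same symbol as a surjective operator; I expect the cleanest route is to observe that this operator is, up to constants and a change of coordinates, the $\mathscr D_1^{(2)}$-type operator whose acyclicity is already available via \eqref{eq:resolution of k CF}, or simply the operator $\delta$ on $\Lambda^2_+$ composed with $\ast$. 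The global step then runs a standard sheaf-cohomology / partition-of-unity argument: the sheaf complex $0\to(\text{locally constant closed }\Lambda^2_-\text{-primitives})\to \mathcal C^\infty(\Lambda^2_-)\xrightarrow{d}\mathcal Z^3\to 0$ is a resolution by fine sheaves of a sheaf $\mathcal K$ whose relevant cohomology is computed by $H^\bullet(U,\mathbb R)$, so $H^3(U,\mathbb R)=0$ forces surjectivity of $d$ on global sections onto closed $3$-forms.

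The main obstacle is making the local surjectivity of $d\colon C^\infty(U_0,\Lambda^2_-)\to\{\text{closed }3\text{-forms on }U_0\}$ precise, and identifying the kernel sheaf correctly so that the global cohomology really is $H^3(U,\mathbb R)$ and not something larger. Concretely I will need to know that the sequence $\Omega^1\xrightarrow{d}\Omega^2\xrightarrow{P}\Omega^3$ is exact, where $P$ is $d$ restricted to $\Lambda^2_-$ precomposed with the projection $\Lambda^2\to\Lambda^2_-$ — i.e.\ that adding the self-dual part contributes nothing to the image in degree $3$. Granting that exactness (which is the content of the symbol computation above, and which on a ball follows from the Poincar\'e lemma plus the observation that $d(\Lambda^2_+)$ and $d(\Lambda^2_-)$ together span all exact $3$-forms while $d(\Lambda^2_-)$ alone already does by dimension count on the symbols), the argument is the usual hypercohomology spectral-sequence / diagram chase comparing the $\Lambda^2_-$-primitive complex with the truncated de Rham complex, and the hypothesis $H^3(U,\mathbb R)=0$ kills the single obstruction class. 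I would then finish by noting that the constructed $f$ can be taken smooth since all operations (solving $d\eta=g$, projecting onto $\Lambda^2_-$, solving the correction equation) preserve $C^\infty$.
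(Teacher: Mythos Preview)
Your opening move is right: use $H^3(U,\mathbb R)=0$ to find a $2$-form $\eta$ with $d\eta=g$, then try to eliminate the self-dual part $\eta_+$. But the step you take from there --- looking for $\psi\in\Lambda^2_-$ with $d\psi=d\eta_+$ --- is the original lemma applied to the closed $3$-form $d\eta_+$, so nothing has been gained, and the rest of the outline never escapes this circularity. In the sheaf-theoretic globalization you sketch, two things fail. First, the sheaf $\mathcal Z^3$ of closed $3$-forms is not fine, so your short exact sequence is not an acyclic resolution. Second, and more seriously, the assertion that the kernel sheaf $\mathcal K=\ker(d\colon\mathcal E^2_-\to\mathcal E^3)$ has its cohomology ``computed by $H^\bullet(U,\mathbb R)$'' is precisely the content of Proposition~\ref{prop:a new resolution} together with Theorem~\ref{thm:real valued harmonic function}, both of which are proved \emph{using} the lemma under discussion. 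Routing through the resolution~\eqref{eq:resolution of k CF} could give local exactness if you fully carried out the operator identification, but then the global step would require $H^1(U,\mathcal R^{(2)})=0$, which is again what the paper is working toward.

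The idea you are missing is to correct $\eta$ by an \emph{exact} form rather than by an anti-self-dual one: look for a $1$-form $F$ with $\eta+dF\in\Lambda^2_-$, i.e.\ $(dF)_+=-\eta_+$. Written in coordinates this is a system of three first-order equations in the four components $F_0,F_1,F_2,F_3$ (the paper's~\eqref{eq:essential lemma: equivalent equations}); after replacing $F_0$ by $-F_0$ it is exactly the imaginary part of a Cauchy--Fueter equation $DF=(\text{pure imaginary quaternion})$, as in~\eqref{eq:essential lemma: Cauchy Fueter equations}--\eqref{eq:essential lemma: main}. By Lemma~\ref{lem:solution of Cauchy Fueter} this is solvable on \emph{any} domain, with no topological hypothesis at all. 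Then $f:=\eta+dF$ lies in $C^\infty(U,\Lambda^2_-)$ and satisfies $df=d\eta=g$. So the hypothesis $H^3(U,\mathbb R)=0$ is used exactly once, to produce some primitive $\eta$; the correction into $\Lambda^2_-$ is unconditional and comes from the global solvability of the Cauchy--Fueter operator.
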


\begin{proof}
Let $g$ be a complex valued closed $3$-form on $U$. Then there exists complex valued $2$-form $u$ such that $du=g$.
Denote by
\begin{equation*}
     u=u_{01}dx_0\wedge dx_1+u_{02}dx_0\wedge dx_2+u_{03}dx_0\wedge dx_3+u_{12}dx_1\wedge dx_2+u_{13}dx_1\wedge dx_3+u_{23}dx_2\wedge dx_3.
\end{equation*}
Note that $d(u+dF)=g$ holds for any  $1$-form $F$. Our goal is to find a $1$-form
$$F=F_0dx_0+F_1dx_1+F_2dx_2+F_3dx_3$$
such that
\begin{equation}\label{eq:essential lemma:1}
    u+dF\in C^\infty(U,\LJ).
\end{equation}

It is easily verified by direct computation that
\begin{equation*}
  \begin{aligned}
     u+dF&=\left(u_{01}+ \frac{\partial}{\partial x_0}F_1-\frac{\partial}{\partial x_1}F_0 \right)dx_0\wedge dx_1+\left(u_{02}+ \frac{\partial}{\partial x_0}F_2-\frac{\partial}{\partial x_2}F_0 \right)dx_0\wedge dx_2\\
     &\quad +\left(u_{03}+ \frac{\partial}{\partial x_0}F_3-\frac{\partial}{\partial x_3}F_0 \right)dx_0\wedge dx_3+\left(u_{12}+ \frac{\partial}{\partial x_1}F_2-\frac{\partial}{\partial x_2}F_1 \right)dx_1\wedge dx_2\\
     &\quad +\left(u_{13}+ \frac{\partial}{\partial x_1}F_3-\frac{\partial}{\partial x_3}F_1 \right)dx_1\wedge dx_3+\left(u_{23}+ \frac{\partial}{\partial x_2}F_3-\frac{\partial}{\partial x_3}F_2 \right)dx_2\wedge dx_3.
     \end{aligned}
\end{equation*}
Then equation \eqref{eq:essential lemma:1} is equivalent to the following equations:
\begin{equation}\label{eq:essential lemma: equivalent equations}
   \left\{ \begin{array}{l}
          -\dfrac{\partial}{\partial x_1}F_0+\dfrac{\partial}{\partial x_0}F_1-\dfrac{\partial}{\partial x_3}F_2+\dfrac{\partial}{\partial x_2}F_3=-u_{01}-u_{23},\\
         -\dfrac{\partial}{\partial x_2}F_0+\dfrac{\partial}{\partial x_3}F_1+\dfrac{\partial}{\partial x_0}F_2-\dfrac{\partial}{\partial x_1}F_3=u_{13}-u_{02},\\
         -\dfrac{\partial}{\partial x_3}F_0-\dfrac{\partial}{\partial x_2}F_1+\dfrac{\partial}{\partial x_1}F_2+ \dfrac{\partial}{\partial x_0}F_3=-u_{03}-u_{12}.
    \end{array}\right.
\end{equation}

We claim that the solution of \eqref{eq:essential lemma: equivalent equations}
is closely related to the solution of Cauchy-Fueter equation.  To establish this relation, we  denote the real and imaginary parts of 2-form $u$ by $u^{(1)}$ and $u^{(2)}$, respectively.
Let $F^{(1)}$ and $F^{(2)}$ be solutions of Cauchy-Fueter equations
\begin{equation}\label{eq:essential lemma: Cauchy Fueter equations}
\begin{aligned}
     DF^{(1)}=-(u_{01}^{(1)}+u_{23}^{(1)})\mathbf{i}+(u_{13}^{(1)}-u_{02}^{(1)})\mathbf{j}-(u_{03}^{(1)}+u_{12}^{(1)})\mathbf{k},\\
      DF^{(2)}=-(u_{01}^{(2)}+u_{23}^{(2)})\mathbf{i}+(u_{13}^{(2)}-u_{02}^{(2)})\mathbf{j}-(u_{03}^{(2)}+u_{12}^{(2)})\mathbf{k},
     \end{aligned}
\end{equation}
respectively. Lemma \ref{lem:solution of Cauchy Fueter} implies that equations \eqref{eq:essential lemma: Cauchy Fueter equations} are always solvable on domain $U$.  We can express equations \eqref{eq:essential lemma: Cauchy Fueter equations}
as a system of equations as following:
\begin{equation}\label{eq:essential lemma: main}
   \left\{ \begin{array}{l}
    \dfrac{\partial}{\partial x_0}F^{(j)}_0-\dfrac{\partial}{\partial x_1}F^{(j)}_1-\dfrac{\partial}{\partial x_2}F^{(j)}_2-\dfrac{\partial}{\partial x_3}F^{(j)}_3=0,\\
          \dfrac{\partial}{\partial x_1}F^{(j)}_0+\dfrac{\partial}{\partial x_0}F^{(j)}_1-\dfrac{\partial}{\partial x_3}F^{(j)}_2+\dfrac{\partial}{\partial x_2}F^{(j)}_3=-u^{(j)}_{01}-u^{(j)}_{23},\\
         \dfrac{\partial}{\partial x_2}F^{(j)}_0+\dfrac{\partial}{\partial x_3}F^{(j)}_1+\dfrac{\partial}{\partial x_0}F^{(j)}_2-\dfrac{\partial}{\partial x_1}F^{(j)}_3=u^{(j)}_{13}-u^{(j)}_{02},\\
         \dfrac{\partial}{\partial x_3}F^{(j)}_0-\dfrac{\partial}{\partial x_2}F^{(j)}_1+\dfrac{\partial}{\partial x_1}F^{(j)}_2+ \dfrac{\partial}{\partial x_0}F^{(j)}_3=-u^{(j)}_{03}-u^{(j)}_{12}.
    \end{array}\right.
\end{equation}
Here $F_i^{(j)},i=0,1,2,3,j=1,2,$ are real valued functions and
\begin{equation*}
     F^{(j)}=F_0^{(j)}+F_1^{(j)}\mathbf{i}+F_2^{(j)}\mathbf{j}+F_3^{(j)}\mathbf{k},\qquad j=1,2.
\end{equation*}

Then we can take 1-form $\tilde{F}$ as following:
\begin{equation*}
     \tilde{F}_0=-F_0^{(1)}-iF_0^{(2)},\qquad \tilde{F}_1=F_1^{1}+iF_1^{(2)}, \qquad \tilde{F}_2=F_2^{1}+iF_2^{(2)}, \qquad \tilde{F}_3=F_3^{1}+iF_3^{(2)}.
\end{equation*}
It follows from equation \eqref{eq:essential lemma: main} that $\tilde{F}$ is a solution of equation  \eqref{eq:essential lemma: equivalent equations}. This completes the proof.
\end{proof}

We now present a new resolution of the sheaf $\mathcal{R}^{(2)}$.
\begin{proposition}\label{prop:a new resolution}
Let $\Omega\subset\mathbb{R}^4$ be a domain. Denote by $\mathcal{E}^i$ the sheaf of complex-valued differential $i$-forms over $\Omega$. Denote by $\mathcal{E}^2_{-}$ the sheaf of sections of the vector bundle $\LJ$ over $\Omega$.
We have the following acyclic resolution of the sheaf $\mathcal{R}^{(2)}$:
\begin{equation}\label{eq:another resolution}
    \begin{split}
0\rightarrow \mathcal{R}^{(2)}\xrightarrow {\eta}\mathcal{E}^2_{-} &
\xrightarrow{d}\mathcal{E}^3
\xrightarrow{d}\mathcal{E}^4\rightarrow 0.\end{split}
\end{equation}
Here $d$ is the exterior differentiation operator and $\eta$ is defined by
\begin{equation}\label{eq:def of eta}
     \eta_U(f):=f_{0^\prime0^\prime}d\bar{z}_0^{1^\prime}\wedge d\bar{z}_1^{1^\prime}+f_{0^\prime1^\prime}(-d\bar{z}_0^{0^\prime}\wedge d\bar{z}_1^{1^\prime}+d\bar{z}_1^{0^\prime}\wedge d\bar{z}_0^{1^\prime})-f_{1^\prime1^\prime}d\bar{z}_1^{0^\prime}\wedge d\bar{z}_0^{0^\prime}
\end{equation}
for any open set $U\subset\Omega$ and any $f\in C^\infty(U,\odot^{2}\mathbb{C}^2)$. Hence we have

\begin{equation}\label{eq:H_1}
  H^1(\Omega,\mathcal{R}^{(2)})\cong\frac{\mathop{Ker}\{d:\mathcal{E}^3(\Omega)\to \mathcal{E}^4(\Omega)\}}{\mathop{Im}\{d:\mathcal{E}^2_-(\Omega)\to \mathcal{E}^3(\Omega) \}}.
\end{equation}

\end{proposition}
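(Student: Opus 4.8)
The plan is to verify that \eqref{eq:another resolution} is indeed a complex, that it is exact as a complex of sheaves, and that each sheaf appearing (other than $\mathcal{R}^{(2)}$) is acyclic; the formula \eqref{eq:H_1} then follows from the standard abstract de Rham / hypercohomology argument. First I would check that $\mathcal{E}^2_-$, $\mathcal{E}^3$, $\mathcal{E}^4$ are fine (hence acyclic) sheaves: they are sheaves of smooth sections of vector bundles over $\Omega$, so they admit partitions of unity, and this is immediate. So the content is entirely in establishing exactness of the sequence \eqref{eq:another resolution} at each spot.

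Exactness at $\mathcal{E}^4$ is just the local solvability of $df = g$ for a top-degree form, i.e.\ the Poincar\'e lemma on small balls (every $4$-form is locally exact). Exactness at $\mathcal{E}^3$ is the key analytic input and is precisely Lemma \ref{lem:essential}: on a small ball $U$ one has $H^3(U,\mathbb{R})=0$, so any closed $3$-form is $df$ for some $f\in C^\infty(U,\LJ)$; combined with the observation $d\circ d = 0$ (so $\operatorname{Im} d \subseteq \operatorname{Ker} d$ at that spot trivially — here I need $d$ applied to an anti-self-dual $2$-form landing in $\mathcal{E}^3$, which is automatic) this gives exactness at $\mathcal{E}^3$. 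Then I must check that $\eta$ is injective with image exactly $\operatorname{Ker}\{d:\mathcal{E}^2_-\to\mathcal{E}^3\}$. Injectivity of $\eta_U$ is a pointwise linear-algebra fact: the three anti-self-dual $2$-forms $d\bar z_0^{1'}\wedge d\bar z_1^{1'}$, $-d\bar z_0^{0'}\wedge d\bar z_1^{1'}+d\bar z_1^{0'}\wedge d\bar z_0^{1'}$, $-d\bar z_1^{0'}\wedge d\bar z_0^{0'}$ attached to $f_{0'0'}$, $f_{0'1'}$, $f_{1'1'}$ should be linearly independent; I would expand them in the standard trivialization \eqref{eq:three section of LJ} (using $z_0^{0'} = -x_2-ix_3$, etc., so that $\bar z$'s are explicit linear combinations of the $dx_j$) and read off that the $3\times 3$ change-of-basis matrix is invertible — a short computation. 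The crucial identification is then: $d(\eta_U(f)) = 0$ if and only if $\mathscr{D}^{(2)}f = 0$. I would prove this by direct computation, taking $d$ of \eqref{eq:def of eta}: since each $\bar z_A^{B'}$ is a linear function, $d(f_{A'B'})\wedge(\text{wedge of two }d\bar z\text{'s})$ is a $3$-form whose coefficients, once collected, should reproduce exactly the four components of $\mathscr{D}^{(2)}f$ displayed in Example (2) of Section \ref{section:Preliminaries}, up to a nonzero constant and a relabelling. The identities $\nabla_A^{B'}\bar z_C^{D'} = 0$ and $\partial_{x_j}$-derivatives hitting the coefficient functions are what drive this; I expect the natural bookkeeping to come out as $d\eta_U(f) = c\sum_{A}(\mathscr{D}^{(2)}f)_{A0'}\,\omega^A_0 + c\sum_A (\mathscr{D}^{(2)}f)_{A1'}\,\omega^A_1$ for suitable independent $3$-forms $\omega$.

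Granting exactness and acyclicity, the conclusion \eqref{eq:H_1} is the abstract de Rham theorem: from an acyclic resolution $0\to\mathcal{R}^{(2)}\to\mathcal{E}^2_-\xrightarrow{d}\mathcal{E}^3\xrightarrow{d}\mathcal{E}^4\to 0$ one computes $H^q(\Omega,\mathcal{R}^{(2)})$ as the cohomology of the complex of global sections $\mathcal{E}^2_-(\Omega)\to\mathcal{E}^3(\Omega)\to\mathcal{E}^4(\Omega)$; in degree $1$ this is exactly the displayed quotient $\operatorname{Ker}\{d:\mathcal{E}^3(\Omega)\to\mathcal{E}^4(\Omega)\}/\operatorname{Im}\{d:\mathcal{E}^2_-(\Omega)\to\mathcal{E}^3(\Omega)\}$.

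\textbf{Main obstacle.} The only genuinely non-formal step is the pointwise identity $d\circ\eta = (\text{const})\cdot\mathscr{D}^{(2)}$, i.e.\ checking that the exterior derivative of the cleverly-chosen $2$-form $\eta_U(f)$ really is the $2$-Cauchy–Fueter operator in disguise; getting the correspondence of the six coefficient pairs right (and confirming the constant is nonzero) is where care is needed, but it is a finite linear computation with the explicit matrices $(z_A^{B'})$ and $(\nabla_A^{B'})$ already recorded in \eqref{eq:def:zAAprime} and \eqref{preliminary:nabla AA prime}. Everything else — fineness of the $\mathcal{E}$'s, the Poincar\'e lemma at the top, injectivity of $\eta$, and the invocation of Lemma \ref{lem:essential} for exactness at $\mathcal{E}^3$ — is routine.
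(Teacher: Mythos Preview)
Your proposal is correct and follows essentially the same approach as the paper's proof: fineness of the $\mathcal{E}$'s, the Poincar\'e lemma at $\mathcal{E}^4$, Lemma \ref{lem:essential} (applied on a convex neighborhood) at $\mathcal{E}^3$, and a direct computation showing that $\eta_U$ is an isomorphism of rank-$3$ bundles under which $d$ corresponds to $\mathscr{D}^{(2)}$. The paper carries out the key computation of $d(\eta_U(f))$ explicitly in the $d\bar z_A^{B'}$-basis (using $2\,dg = \sum \nabla_A^{B'}g\, d\bar z_A^{B'}$) and reads off the four $2$-Cauchy--Fueter equations exactly as you anticipate.
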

\emph{Proof.}
Firstly, we will show that the image of $\eta_U$ belongs to $C^\infty(U,\LJ)$ for fixed open subset $U$. Recall
the definition of $z_A^{A^\prime}$ \eqref{eq:def:zAAprime}. It follows from direct computation that
\begin{equation}\label{eq:dx dz dengjia:1}
  \begin{aligned}
   f_{0^\prime0^\prime}d\bar{z}_0^{1^\prime}\wedge d\bar{z}_1^{1^\prime}&=f_{0^\prime0^\prime}(-dx_0+idx_1)\wedge(-dx_2-idx_3)\\
  &=f_{0^\prime0^\prime}\left(dx_0\wedge dx_2+dx_1\wedge dx_3+i (dx_0\wedge dx_3-dx_1\wedge dx_2)\right).
  \end{aligned}
\end{equation}
Then equation \eqref{eq:three section of LJ} implies that
\begin{equation*}
   f_{0^\prime0^\prime}d\bar{z}_0^{1^\prime}\wedge d\bar{z}_1^{1^\prime}\in C^\infty(U,\LJ).
\end{equation*}
Similarly, we can check that
\begin{equation}\label{eq:dx dz dengjia:2}
   f_{1^\prime1^\prime}d\bar{z}_1^{0^\prime}\wedge d\bar{z}_0^{0^\prime}=f_{1^\prime1^\prime}\left(-dx_0\wedge dx_2-dx_1\wedge dx_3+i (dx_0\wedge dx_3-dx_1\wedge dx_2)\right)\in C^\infty(U,\LJ).
\end{equation}
 And we have
\begin{equation}\label{eq:dx dz dengjia:3}
\begin{aligned}
   &\quad f_{0^\prime1^\prime}(-d\bar{z}_0^{0^\prime}\wedge d\bar{z}_1^{1^\prime}+d\bar{z}_1^{0^\prime}\wedge d\bar{z}_0^{1^\prime})\\
   &=f_{0^\prime1^\prime}\left(-(-dx_2+idx_3)\wedge(-dx_2-idx_3)+(dx_0+idx_1)\wedge(-dx_0+idx_1)\right)\\
   &=2if_{0^\prime1^\prime}(dx_0\wedge dx_1-dx_2\wedge dx_3)\in C^\infty(U,\LJ).
   \end{aligned}
\end{equation}
This implies that $\eta_U(f)\in C^\infty(U,\LJ)$.

\bigskip

By the definition of a resolution of a sheaf \cite{Wells08}, we only need to show that the sequence \eqref{eq:another resolution} is exact. We will divide the proof into the following steps.

\bigskip
 {\bf Step 1:} For any $x\in \Omega$, there is a neighborhood $U$ of $x$ such that $\eta_U$ is injective.
\bigskip

Assume that there exists $f_1,f_2\in   C^\infty(U,\odot^{2}\mathbb{C}^2)$ such that
\begin{equation*}
   f_1\neq f_2,\qquad \eta_U(f_1)=\eta_U (f_2).
\end{equation*}
It follows from the definition of $\eta_U$ \eqref{eq:def of eta} that
 $$(f_1)_{0^\prime0^\prime}=(f_2)_{0^\prime0^\prime},\quad (f_1)_{0^\prime1^\prime}=(f_2)_{0^\prime1^\prime},\quad (f_1)_{1^\prime1^\prime}=(f_2)_{1^\prime1^\prime}.$$
Recall that $\odot^2\mathbb{C}^2$ is the subspace of $\otimes^2\mathbb{C}^2$ satisfying the relations:
$$f_{0^\prime1^\prime}=f_{1^\prime0^\prime}.$$
This implies that
\begin{equation*}
   (f_1)_{0'0'} = (f_2)_{0'0'},\quad (f_1)_{0'1'} = (f_2)_{0'1'},\quad (f_1)_{1'1'} = (f_2)_{1'1'},\quad (f_1)_{1'0'} = (f_2)_{1'0'},
\end{equation*}
and hence \(f_1 = f_2\) as functions taking values in \(\otimes^2\mathbb{C}^2\), which leads to a contradiction.

\bigskip
 {\bf Step 2:} For any $x\in \Omega$, there is a neighborhood $U$ of $x$ such that
 \begin{equation*}
    \mathop{Ker}d\cap C^\infty(U,\LJ)=\mathop{Im}\eta_U.
 \end{equation*}

It follows from \eqref{eq:dx dz dengjia:1} \eqref{eq:dx dz dengjia:2} and \eqref{eq:dx dz dengjia:3} that
differential forms
$$d\bar{z}_0^{1^\prime}\wedge d\bar{z}_1^{1^\prime},\qquad -d\bar{z}_0^{0^\prime}\wedge d\bar{z}_1^{1^\prime}+d\bar{z}_1^{0^\prime}\wedge d\bar{z}_0^{1^\prime},\qquad d\bar{z}_1^{0^\prime}\wedge d\bar{z}_0^{0^\prime}$$
form a basis at each fiber space of $\LJ$. Then we can denote an element $f\in  C^\infty(U,\LJ)$ by
\begin{equation*}
   f=f_0d\bar{z}_0^{1^\prime}\wedge d\bar{z}_1^{1^\prime}+f_1(-d\bar{z}_0^{0^\prime}\wedge d\bar{z}_1^{1^\prime}+d\bar{z}_1^{0^\prime}\wedge d\bar{z}_0^{1^\prime})-f_2d\bar{z}_1^{0^\prime}\wedge d\bar{z}_0^{0^\prime}.
\end{equation*}

Note that the action of exterior differential operator $d$ on a smooth function $g$ can be represented as follows:
\begin{align*}
-2dg&= \left(\left(-\frac{\partial }{\partial x_2}-i\frac{\partial}{\partial x_3}\right)g\right)\, (dx_2-dx_3i)+\left(\left(-\frac{\partial}{\partial x_2}+i\frac{\partial}{\partial x_3}\right)g\right) \, (dx_2+dx_3i)\\
&\quad+\left(\left(-\frac{\partial}{\partial x_0}-i\frac{\partial}{\partial x_1}\right)g\right) \, (dx_0-dx_1i)+\left(\left(\frac{\partial}{\partial x_0}-i\frac{\partial}{\partial x_1}\right)g\right) \, (-dx_0-dx_1i)
   \\ &=\nabla_0^{0^\prime}gd\bar{z}_0^{0^\prime}+\nabla_0^{1^\prime}gd\bar{z}_0^{1^\prime}+\nabla_1^{0^\prime}gd\bar{z}_1^{0^\prime}+\nabla_1^{1^\prime}gd\bar{z}_1^{1^\prime}.
\end{align*}
Assume that $f\in \mathop{Ker}d\cap C^\infty(U,\LJ)$. One checks directly that
\begin{equation*}
\begin{aligned}
   -2df&=\nabla_0^{0^\prime}f_0d\bar{z}_0^{0^\prime}\wedge d\bar{z}_0^{1^\prime}\wedge d\bar{z}_1^{1^\prime}+\nabla_1^{0^\prime}f_0d\bar{z}_1^{0^\prime}\wedge d\bar{z}_0^{1^\prime}\wedge d\bar{z}_1^{1^\prime}\\
   &\quad -\nabla_0^{1^\prime}f_1d\bar{z}_0^{1^\prime}\wedge d\bar{z}_0^{0^\prime}\wedge d\bar{z}_1^{1^\prime}-\nabla_1^{0^\prime}f_1d\bar{z}_1^{0^\prime}\wedge d\bar{z}_0^{0^\prime}\wedge d\bar{z}_1^{1^\prime}\\
   &\quad +\nabla_0^{0^\prime} f_1d\bar{z}_0^{0^\prime}\wedge d\bar{z}_1^{0^\prime}\wedge d\bar{z}_0^{1^\prime}+\nabla_1^{1^\prime} f_1d\bar{z}_1^{1^\prime}\wedge d\bar{z}_1^{0^\prime}\wedge d\bar{z}_0^{1^\prime}\\
   &\quad -\nabla_0^{1^\prime}f_2d\bar{z}_0^{1^\prime}\wedge d\bar{z}_1^{0^\prime}\wedge d\bar{z}_0^{0^\prime}-\nabla_1^{1^\prime}f_2d\bar{z}_1^{1^\prime}\wedge d\bar{z}_1^{0^\prime}\wedge d\bar{z}_0^{0^\prime}.
\end{aligned}
\end{equation*}
By exchanging the order of terms in the above equation, we have
\begin{equation*}
     \begin{aligned}
   -2df&=\nabla_0^{0^\prime}f_0d\bar{z}_0^{0^\prime}\wedge d\bar{z}_0^{1^\prime}\wedge d\bar{z}_1^{1^\prime}-\nabla_0^{1^\prime}f_1d\bar{z}_0^{1^\prime}\wedge d\bar{z}_0^{0^\prime}\wedge d\bar{z}_1^{1^\prime}\\
   &\quad +\nabla_1^{0^\prime}f_0d\bar{z}_1^{0^\prime}\wedge d\bar{z}_0^{1^\prime}\wedge d\bar{z}_1^{1^\prime}+\nabla_1^{1^\prime} f_1d\bar{z}_1^{1^\prime}\wedge d\bar{z}_1^{0^\prime}\wedge d\bar{z}_0^{1^\prime}\\
   &\quad +\nabla_0^{0^\prime} f_1d\bar{z}_0^{0^\prime}\wedge d\bar{z}_1^{0^\prime}\wedge d\bar{z}_0^{1^\prime}-\nabla_0^{1^\prime}f_2d\bar{z}_0^{1^\prime}\wedge d\bar{z}_1^{0^\prime}\wedge d\bar{z}_0^{0^\prime}\\
   &\quad -\nabla_1^{0^\prime}f_1d\bar{z}_1^{0^\prime}\wedge d\bar{z}_0^{0^\prime}\wedge d\bar{z}_1^{1^\prime}-\nabla_1^{1^\prime}f_2d\bar{z}_1^{1^\prime}\wedge d\bar{z}_1^{0^\prime}\wedge d\bar{z}_0^{0^\prime}.
\end{aligned}
\end{equation*}
 This implies that
 \begin{equation*}
 \begin{aligned}
     0=-2df&=(\nabla_0^{0^\prime}f_0+\nabla_0^{1^\prime}f_1)d\bar{z}_0^{0^\prime}\wedge d\bar{z}_0^{1^\prime}\wedge d\bar{z}_1^{1^\prime}\\
     &\quad +(\nabla_1^{0^\prime}f_0+\nabla_1^{1^\prime}f_1)d\bar{z}_1^{0^\prime}\wedge d\bar{z}_0^{1^\prime}\wedge d\bar{z}_1^{1^\prime}\\
     &\quad +(\nabla_0^{0^\prime}f_1+\nabla_0^{1^\prime}f_2)d\bar{z}_0^{0^\prime}\wedge d\bar{z}_1^{0^\prime}\wedge d\bar{z}_0^{1^\prime}\\
     &\quad -(\nabla_1^{0^\prime}f_1+\nabla_1^{1^\prime}f_2)d\bar{z}_1^{0^\prime}\wedge d\bar{z}_0^{0^\prime}\wedge d\bar{z}_1^{1^\prime}.
     \end{aligned}
 \end{equation*}

 Note that the differential form
\[
d\bar{z}_0^{0^\prime}\wedge d\bar{z}_1^{1^\prime}\wedge d\bar{z}_0^{1^\prime}\wedge d\bar{z}_1^{0^\prime}=4dx_2\wedge dx_3\wedge dx_0\wedge dx_1
\]
is not identically zero everywhere. Assume there exists a set of constants
\[
\bigl\{C_A^{A'}\bigr\}_{A=0,1,\;A'=0',1'}
\]
for which the following identity holds:
\begin{equation*}
\begin{aligned}
0&=C_1^{0'} \, d\bar{z}_0^{0'} \wedge d\bar{z}_0^{1'} \wedge d\bar{z}_1^{1'}
+ C_0^{0'} \, d\bar{z}_1^{0'} \wedge d\bar{z}_0^{1'} \wedge d\bar{z}_1^{1'} \\
&\quad + C_1^{1'} \, d\bar{z}_0^{0'} \wedge d\bar{z}_1^{0'} \wedge d\bar{z}_0^{1'}
+ C_0^{1'} \, d\bar{z}_1^{0'} \wedge d\bar{z}_0^{0'} \wedge d\bar{z}_1^{1'}.
\end{aligned}
\end{equation*}
By wedging the differential forms \(d\bar{z}_A^{A^\prime}\) into the above equation, we obtain
\begin{align*}
  &C_1^{0'} d\bar{z}_1^{0'}\wedge d\bar{z}_0^{0'} \wedge d\bar{z}_0^{1'} \wedge d\bar{z}_1^{1'}=0,\quad C_0^{0'} d\bar{z}_0^{0'}\wedge d\bar{z}_1^{0'} \wedge d\bar{z}_0^{1'} \wedge d\bar{z}_1^{1'}=0,\\
  &C_1^{1'} d\bar{z}_1^{1'}\wedge d\bar{z}_0^{0'} \wedge d\bar{z}_1^{0'} \wedge d\bar{z}_0^{1'}=0,\quad C_0^{1'}d\bar{z}_0^{1'}\wedge d\bar{z}_1^{0'} \wedge d\bar{z}_0^{0'} \wedge d\bar{z}_1^{1'}=0.
\end{align*}
Hence each $C_A^{A^\prime}$ equals to $0$ and the differential forms
\[
d\bar{z}_0^{0^\prime}\wedge d\bar{z}_0^{1^\prime}\wedge d\bar{z}_1^{1^\prime},\quad
d\bar{z}_1^{0^\prime}\wedge d\bar{z}_0^{1^\prime}\wedge d\bar{z}_1^{1^\prime},\quad
d\bar{z}_0^{0^\prime}\wedge d\bar{z}_1^{0^\prime}\wedge d\bar{z}_0^{1^\prime},\quad
d\bar{z}_1^{0^\prime}\wedge d\bar{z}_0^{0^\prime}\wedge d\bar{z}_1^{1^\prime}
\]
are linearly independent everywhere.

Then equations
\begin{equation}\label{eq:prop:step 2:1}
\nabla_A^{0^\prime}f_0+\nabla_A^{1^\prime}f_1=0,\qquad \nabla_A^{0^\prime}f_1+\nabla_A^{1^\prime}f_2=0,
\end{equation}
hold for any $A=0,1$.

Now we need to find $\tilde{f}\in C^\infty(U,\odot^2\mathbb{C}^2)\cap\mathop{Ker}\mathscr{D}^{(2)}$ such that $\eta_U(\tilde{f})=f$. We take $\tilde{f}$ as follows:
\begin{equation*}
    \tilde{f}_{0^\prime0^\prime}=f_0,\qquad \tilde{f}_{0^\prime1^\prime}=\tilde{f}_{1^\prime0^\prime}=f_1,\qquad \tilde{f}_{1^\prime1^\prime}=f_2.
\end{equation*}
 It is clear that $\eta_U(\tilde{f})=f$. The fact that $\tilde{f}\in\mathop{Ker}\mathscr{D}^{(2)}$ follows from equation \eqref{eq:prop:step 2:1}.

\bigskip
 {\bf Step 3:} For any $x\in \Omega$, there is a neighborhood $U$ of $x$ such that for any complex valued closed $3$-form $g$ on $U$, there exists $f\in C^\infty(U,\LJ)$ such that $df=g$.

\bigskip

By taking $U$ to be a convex neighborhood of $x$ in Lemma \ref{lem:essential}, the proof of Step 3 follows directly from Lemma \ref{lem:essential}.

\bigskip
 {\bf Step 4:} For any $x\in \Omega$, there is a neighborhood $U$ of $x$ such that for any complex valued $4$-form $g$ on $U$, there exists a complex valued $3$-form $f$ such that $df=g$.
\bigskip

The proof of {\bf Step 4} follows from the well known Poincar\'{e} Lemma.
Finally, since the sheaf $\mathcal{E}$ is fine, the resolution is acyclic and equation \eqref{eq:H_1} holds.
\qed

\section{Proof of Theorem \ref{thm:real valued harmonic function} }\label{section:Proof of Theorem real valued harmonic function}

 This section is devoted to the proof of Theorem \ref{thm:real valued harmonic function}, which begins with the following lemma.
\begin{lemma}\label{lem:useful}
    Let $\Omega$ be a domain of $\mathbb{R}^4$ and $f_0,f_1,f_2$ be three real valued functions on $\Omega$. Let $F$ be an anti-self-dual $2$-form defined by
\begin{equation*}
     F=f_1(dx_0\wedge dx_1-dx_2\wedge dx_3)+f_2(dx_0\wedge dx_2+dx_1\wedge dx_3)+f_3(dx_0\wedge dx_3-dx_1\wedge dx_2).
\end{equation*}
    Then we have
    \begin{equation}\label{eq:useful lemma:main}
         D(f_1\mathbf{i}+f_2\mathbf{j}+f_3\mathbf{k})=(dF)_{123}-(dF)_{023}\mathbf{i}+(dF)_{013}\mathbf{j}-(dF)_{012}\mathbf{k}.
    \end{equation}
    Here each $(dF)_{\alpha\beta\gamma}$ is the component of $dF$ with respect to the form $dx_\alpha\wedge dx_\beta \wedge dx_\gamma$.
    \end{lemma}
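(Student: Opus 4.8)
The plan is to prove Lemma \ref{lem:useful} by a direct computation, organized so that the bookkeeping of signs and indices is manageable. First I would write out $dF$ explicitly as a $3$-form. Since $F$ has six potential $dx_\alpha\wedge dx_\beta$ components, but is constrained to be anti-self-dual (so $F_{01}=-F_{23}=f_1$, $F_{02}=F_{13}=f_2$, $F_{03}=-F_{12}=f_3$), computing $dF=\sum_\mu \partial_\mu F \wedge (\text{the }2\text{-form})$ gives four $3$-form components, namely $(dF)_{012}, (dF)_{013}, (dF)_{023}, (dF)_{123}$. Each of these is a sum of three first-order partials of $f_1, f_2, f_3$ with definite signs coming from the antisymmetrization. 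I would record these four expressions carefully.

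Next I would expand the left-hand side $D(f_1\mathbf{i}+f_2\mathbf{j}+f_3\mathbf{k})$ using $D=\partial_{x_0}+\mathbf{i}\partial_{x_1}+\mathbf{j}\partial_{x_2}+\mathbf{k}\partial_{x_3}$ and the quaternion multiplication table $\mathbf{i}^2=\mathbf{j}^2=\mathbf{k}^2=-1$, $\mathbf{ij}=\mathbf{k}$, $\mathbf{jk}=\mathbf{i}$, $\mathbf{ki}=\mathbf{j}$ (and the opposite-sign products). Collecting terms by their quaternion unit ($1, \mathbf{i}, \mathbf{j}, \mathbf{k}$) produces four real expressions, each again a sum of three first-order partials of the $f_i$. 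Then it is simply a matter of matching: the scalar part of $D(\cdots)$ should equal $(dF)_{123}$, the $\mathbf{i}$-part should equal $-(dF)_{023}$, the $\mathbf{j}$-part should equal $(dF)_{013}$, and the $\mathbf{k}$-part should equal $-(dF)_{012}$. I would verify these four identities term by term, paying attention to the signs introduced by reordering wedge factors into the canonical increasing order.

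There is no genuine conceptual obstacle here; the lemma is a concrete identity. The main point of care is sign conventions: both the Hodge-star/anti-self-duality identification of the components of $F$ and the reordering of wedge products in $dF$ introduce signs that must be tracked consistently, and the quaternion products $\mathbf{ij}, \mathbf{jk}, \mathbf{ki}$ must be used in the correct order since the imaginary units are on the left in $D$. I would therefore present the computation in two clearly separated blocks — one for $dF$ and one for $D(f_1\mathbf{i}+f_2\mathbf{j}+f_3\mathbf{k})$ — and then state the four matching identities, so that the reader can check each of the twelve terms directly. Once the four component identities are confirmed, \eqref{eq:useful lemma:main} follows immediately by assembling the quaternion.

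It is also worth noting, for later use in the proof of Theorem \ref{thm:real valued harmonic function}, that \eqref{eq:useful lemma:main} shows $dF=0$ precisely when $f_1\mathbf{i}+f_2\mathbf{j}+f_3\mathbf{k}$ is quaternionic regular; this is the bridge between closed anti-self-dual $3$-forms (equivalently, harmonic primitives via Lemma \ref{lem:essential}) and $\operatorname{Im}(\mathbb{H})$-valued regular functions, and it is the reason the lemma is phrased with the anti-self-dual ansatz for $F$.
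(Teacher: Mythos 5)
Your proposal is correct and is essentially the paper's own proof: the paper likewise writes out $D(f_1\mathbf{i}+f_2\mathbf{j}+f_3\mathbf{k})$ component by component, writes out the four components of $dF$, and matches them with the stated signs. No further comment is needed.
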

\begin{proof}
 It follows from direct computation that the left hand side of equation \eqref{eq:useful lemma:main} equals to
    \begin{equation}\label{eq:useful lemma:1}
\begin{aligned}
     D(f_1\mathbf{i}+f_2\mathbf{j}+f_3\mathbf{k})&=-\frac{\partial}{\partial x_1}f_1-\frac{\partial}{\partial x_2}f_2-\frac{\partial}{\partial x_3}f_3\\
     &\quad +\left(\frac{\partial}{\partial x_0}f_1-\frac{\partial}{\partial x_3}f_2+\frac{\partial}{\partial x_2}f_3\right)\mathbf{i}\\
     &\quad +\left(\frac{\partial}{\partial x_3}f_1+\frac{\partial}{\partial x_0}f_2-\frac{\partial}{\partial x_1}f_3\right)\mathbf{j}\\
     &\quad +\left(-\frac{\partial}{\partial x_2}f_1+\frac{\partial}{\partial x_1}f_2+ \frac{\partial}{\partial x_0}f_3\right)\mathbf{k}.
     \end{aligned}
\end{equation}
On the other hand, we have
\begin{equation}\label{eq:useful lemma:2}
\begin{aligned}
    dF&=  \left(-\frac{\partial}{\partial x_1}f_1-\frac{\partial}{\partial x_2}f_2-\frac{\partial}{\partial x_3}f_3\right)dx_1\wedge dx_2 \wedge dx_3     \\
    &\quad +\left(-\frac{\partial}{\partial x_0}f_1+\frac{\partial}{\partial x_3}f_2-\frac{\partial}{\partial x_2}f_3\right)dx_0\wedge dx_2 \wedge dx_3   \\
    &\quad +\left(\frac{\partial}{\partial x_3}f_1+\frac{\partial}{\partial x_0}f_2-\frac{\partial}{\partial x_1}f_3\right)dx_0\wedge dx_1\wedge dx_3\\
    &\quad +\left(\frac{\partial}{\partial x_2}f_1-\frac{\partial}{\partial x_1}f_2- \frac{\partial}{\partial x_0}f_3\right)dx_0\wedge dx_1\wedge dx_2.
    \end{aligned}
\end{equation}
It follows from \eqref{eq:useful lemma:1} and \eqref{eq:useful lemma:2} that the equation \eqref{eq:useful lemma:main} holds true. This completes the proof.
\end{proof}
\bigskip

\emph{Proof of Theorem \ref{thm:real valued harmonic function}.}
The proof of the equivalence (3) $\Leftrightarrow$ (4) closely parallels the complex case treated in Theorem \ref{thm:C andH}. Consider the short exact sequence of sheaves:
$$0 \to \widetilde{\mathcal{R}}\hookrightarrow \mathcal{R}\xrightarrow{\operatorname{Re}} \mathcal{H}\to 0,$$
where $\mathcal{R}$ denotes the sheaf of quaternionic regular functions over $\Omega$, and $\mathcal{H}$ the sheaf of real-valued harmonic functions over $\Omega$. Exactness at $\mathcal{H}$ follows from the fact that every real-valued harmonic function is locally the real part of a quaternionic regular function \cite[Theorem 4]{Sudbery79}. This induces the long exact sequence in cohomology:
$$ 0  \rightarrow  H^0(\Omega, \widetilde{\mathcal{R}})  \rightarrow    H^0(\Omega, {\mathcal{R}}) \xrightarrow{\operatorname{Re}} H^0(\Omega, {\mathcal{H}}) \rightarrow  H^1(\Omega, \widetilde{\mathcal{R}})  \rightarrow   H^1(\Omega, \mathcal{R}) \rightarrow \cdots .$$
Recall from Lemma \ref{lem:solution of Cauchy Fueter} that $ H^1(\Omega, \mathcal{R})=0$. Hence, the map $H^0(\Omega, \mathcal{R}) \xrightarrow{\operatorname{Re}} H^0(\Omega, \mathcal{H})$ is surjective if and only if $H^1(\Omega, \widetilde{\mathcal{R}}) = 0$, which is precisely the equivalence (3) $\Leftrightarrow$ (4).

The proof of $(1)\Rightarrow (2)$ follows from Proposition \ref{prop:a new resolution} and Lemma \ref{lem:essential}.

Next, we will prove  $(2)\Rightarrow (3)$. Let $h$ be a real valued harmonic function on $\Omega$. Then the existence of quaternionic regular function $f$ with $\operatorname{Re}f=h$ is equivalent to the fact that there exists solutions $f_1, f_2, f_3$ of equation
\begin{equation}\label{eq:proof of thm1:2tui1:1}
    D(f_1\mathbf{i}+f_2\mathbf{j}+f_3\mathbf{k})=-Dh.
\end{equation}
By Lemma \ref{lem:useful}, we need to find an anti-self-dual $2$-from $F$ such that
\begin{equation*}
    (dF)_{123}-(dF)_{023}\mathbf{i}+(dF)_{013}\mathbf{j}-(dF)_{012}\mathbf{k}=-Dh.
\end{equation*}
More precisely, $F$ is required to satisfy that
\begin{equation}\label{eq:proof of thm1:2tui1:2}
    (dF)_{123}=-\frac{\partial}{\partial x_0}h,\quad  (dF)_{023}= \frac{\partial}{\partial x_1}h,\quad  (dF)_{013}=-\frac{\partial}{\partial x_2}h,\quad  (dF)_{012}=\frac{\partial}{\partial x_3}h.,
\end{equation}
or equivalently,
\begin{equation*}
   dF=H=\frac{\partial }{\partial x_0} hdx_1\wedge dx_2 \wedge dx_3-\frac{\partial }{\partial x_1} hdx_0\wedge dx_2 \wedge dx_3+ \frac{\partial }{\partial x_2} hdx_0\wedge dx_1 \wedge dx_3- \frac{\partial }{\partial x_3} hdx_0\wedge dx_1 \wedge dx_2.
\end{equation*}
Note that
\begin{equation*}
    dH=\Delta hdx_0\wedge dx_1\wedge dx_2\wedge dx_3=0.
\end{equation*}
Then the condition $H^1(\Omega,\mathcal{R}^{(2)})=0$ implies that there exists $\tilde{F}\in C^\infty(\Omega,\LJ)$ such that $d{\tilde{F}}=H$. It is clear that $\tilde{F}$ satisfies the requirement in equation \eqref{eq:proof of thm1:2tui1:2}.
Denote by
\begin{equation*}
     \tilde{F}=\tilde{F}_1(dx_0\wedge dx_1-dx_2\wedge dx_3)+\tilde{F}_2(dx_0\wedge dx_2+dx_1\wedge dx_3)+\tilde{F}_3(dx_0\wedge dx_3-dx_1\wedge dx_2).
\end{equation*}
Then Lemma \ref{lem:useful} implies that function
\begin{equation*}
     h+\tilde{F}_1\mathbf{i}+\tilde{F}_2\mathbf{j}+\tilde{F}_3\mathbf{k}
\end{equation*}
is a quaternionic regular function with real part $h$.

Finally, we need to prove $(3)\Rightarrow(1)$.

Assume that $g$ is a smooth real-valued closed $3$-form. Our goal is to find a $2$-form $H$ such that $dH=g$.

Denote by $d^\ast$ the formal adjoint operator of $d$. It is clear that  $dd^\ast+d^\ast d=-\Delta$ and
\begin{equation*}
    (dd^\ast+d^\ast d)G=-\sum_{0\leqslant\alpha<\beta<\gamma\leqslant 3}\Delta G_{\alpha\beta\gamma}dx_\alpha\wedge dx_\beta \wedge dx_\gamma
\end{equation*}
holds for any 3-form $G$. Denote by  $G_{\alpha\beta\gamma}$ the solutions of equations
\begin{equation*}
    \Delta G_{\alpha\beta\gamma}=g_{\alpha\beta\gamma}, \qquad 0\leqslant \alpha<\beta<\gamma\leqslant3,
\end{equation*}
 respectively. The existence of such $G$ follows from the facts that Dirac equations are always solvable for smooth functions on domain $\Omega$ \cite{BDS82} . Denote by
 \begin{equation*}
     G=\sum_{0\leqslant\alpha<\beta<\gamma\leqslant 3}G_{\alpha\beta\gamma}dx_\alpha\wedge dx_\beta \wedge dx_\gamma.
 \end{equation*}
 Then we have
 \[-dd^\ast G-d^\ast dG=g.\]
We only need to show that there exists an $H_1$ such that $dH_1=d^\ast dG$.

Due to the commutativity of the Laplacian $\Delta$ with the partial derivatives $\dfrac{\partial}{\partial x_i},i=0,1,2,3$ and since $dg=0$, it follows that $(dG)_{0123}$ is a real valued harmonic function. Assuming from  $(3)$ that there exists real valued functions $F_1,F_2,F_3$ such that $(dG)_{0123}+F_1\mathbf{i}+F_2\mathbf{j}+F_3\mathbf{k}$ is a quaternionic regular on $\Omega$. It follows from Lemma \ref{lem:useful} that there exists an anti-self-dual $2$-form $\hat{F}$ such that
\begin{equation*}
    -D((dG)_{0123})=(d\hat{F})_{123}-(d\hat{F})_{023}\mathbf{i}+(d\hat{F})_{013}\mathbf{j}-(d\hat{F})_{012}\mathbf{k}.
\end{equation*}
It is equivalent to following equations:
\begin{equation}\label{eq:proof of thm1:final:1}
\begin{aligned}
    &\frac{\partial }{\partial x_0}(dG)_{0123}=-(d\hat{F})_{123},\qquad \frac{\partial }{\partial x_1}(dG)_{0123}=(d\hat{F})_{023},\\
    &\frac{\partial }{\partial x_2}(dG)_{0123}=-(d\hat{F})_{013}, \qquad \frac{\partial }{\partial x_3}(dG)_{0123}= (d\hat{F})_{012}.
    \end{aligned}
\end{equation}
Notice that
\begin{equation}\label{eq:proof of thm1:finalfinal}
\begin{aligned}
    d^\ast dG&=-\frac{\partial }{\partial x_0}(dG)_{0123}dx_1\wedge dx_2\wedge dx_3 +\frac{\partial }{\partial x_1}(dG)_{0123}dx_0\wedge dx_2\wedge dx_3\\
    &\quad -\frac{\partial }{\partial x_2}(dG)_{0123}dx_0\wedge dx_1\wedge dx_3 +\frac{\partial }{\partial x_3}(dG)_{0123}dx_0\wedge dx_1\wedge dx_2.
    \end{aligned}
\end{equation}
It follows from \eqref{eq:proof of thm1:final:1} and \eqref{eq:proof of thm1:finalfinal} that
\begin{equation*}
    d^\ast dG=d\hat{F}.
\end{equation*}
Then we have
\begin{equation*}
    g=-(d^\ast d+dd^\ast )G=d(-\hat{F}-d^\ast G).
\end{equation*}
This completes the proof. \qed

\section{Proof of Theorem \ref{thm:inverse problem}}\label{section:Proof of Theorem inverse problem}

We now present the proof of Theorem \ref{thm:inverse problem}, which characterizes the representability of harmonic functions by $1$-regular functions via the operators $L_0$ and $L_1$.

\bigskip

\emph{Proof of Theorem \ref{thm:inverse problem}.}
Recall from the definition of $L_0$ and $L_1$ \eqref{eq:def of L0L1} that
\begin{equation}\label{eq:thm inverse: obviously:1}
    2h=L_0\left(\frac{\overline{z_0^{0^\prime}}}{|q|^2}h,\frac{\overline{z_0^{1^\prime}}}{|q|^2}h\right)+L_1\left(\frac{\overline{z_1^{0^\prime}}}{|q|^2}h,\frac{\overline{z_1^{1^\prime}}}{|q|^2}h\right).
\end{equation}
Define functions $f_X=(f_{X0},f_{X1})$ and $g_X=(g_{X0},g_{X1})$ by
\begin{equation}\label{eq:thm inverse: obviously:2}
    (f_{X0},f_{X1})=\left(\frac{\overline{z_0^{0^\prime}}}{|q|^2}h,\frac{\overline{z_0^{1^\prime}}}{|q|^2}h\right)+L_1X,\qquad (g_{X0},g_{X1})=\left(\frac{\overline{z_1^{0^\prime}}}{|q|^2}h,\frac{\overline{z_1^{1^\prime}}}{|q|^2}h\right)-L_0X.
\end{equation}
Here $X\in C^\infty(\Omega,\odot^2\mathbb{C}^2)$. It follows from Proposition \ref{prop:L0L1} that $\frac{1}{2}f_X,\frac{1}{2}g_X$ satisfy the equation
\[h=L_0\left(\frac{f_X}{2}\right)+L_1\left(\frac{g_X}{2}\right).\]
 It then suffices to find $X$ such that $f_X,g_X$ are $1$-regular on $\Omega$. More precisely, the function $X$ is required to satisfy following equations:
\begin{equation}\label{eq:thm inverse:2}
    \mathscr{D}^{(1)}(L_1X)=-\mathscr{D}^{(1)}\left(\frac{\overline{z_0^{0^\prime}}}{|q|^2}h,\frac{\overline{z_0^{1^\prime}}}{|q|^2}h\right),\qquad \mathscr{D}^{(1)}(L_0X)=\mathscr{D}^{(1)}\left(\frac{\overline{z_1^{0^\prime}}}{|q|^2}h,\frac{\overline{z_1^{1^\prime}}}{|q|^2}h\right).
\end{equation}
By Proposition \ref{prop:L0L1}, the requirement  \eqref{eq:thm inverse:2} is equivalent to
\begin{equation}\label{eq:thm inverse:3}
    L_1\mathscr{D}^{(2)}X=-\mathscr{D}^{(1)}\left(\frac{\overline{z_0^{0^\prime}}}{|q|^2}h,\frac{\overline{z_0^{1^\prime}}}{|q|^2}h\right),\qquad L_0\mathscr{D}^{(2)}X=\mathscr{D}^{(1)}\left(\frac{\overline{z_1^{0^\prime}}}{|q|^2}h,\frac{\overline{z_1^{1^\prime}}}{|q|^2}h\right).
\end{equation}
We require the explicit form of equation \eqref{eq:thm inverse:3}. Recall from the definition of $\nabla_A^{B^\prime}$ \eqref{preliminary:nabla AA prime}, $z_j^{k^\prime}$ \eqref{eq:def:zAAprime} and Lemma \ref{lem:computation z} that
\begin{equation}\label{eq:thm inverse:4}
    \overline{z_0^{0^\prime}}=z_1^{1^\prime},\qquad \overline{z_0^{1^\prime}}=-z_1^{0^\prime},\qquad \nabla_A^{B^\prime}|q|^2=-2z_A^{B^\prime}, \qquad \nabla_A^{B^\prime}z_j^{k^\prime}=-2\epsilon^{jA}\epsilon^{k^\prime B^\prime}.
\end{equation}
It follows from \eqref{eq:thm inverse:4} that
\begin{equation*}
\begin{aligned}
    -\left(\mathscr{D}^{(1)}\left(\frac{\overline{z_0^{0^\prime}}}{|q|^2}h,\frac{\overline{z_0^{1^\prime}}}{|q|^2}h\right)\right)_A
    &=-\nabla_A^{0^\prime}\left(\frac{z_1^{1^\prime}h}{|q|^2}\right)+\nabla_A^{1^\prime}\left(\frac{z_1^{0^\prime}h}{|q|^2}\right),\\
    \left(\mathscr{D}^{(1)}\left(\frac{\overline{z_1^{0^\prime}}}{|q|^2}h,\frac{\overline{z_1^{1^\prime}}}{|q|^2}h\right)\right)_A
    &=-\nabla_A^{0^\prime}\left(\frac{z_0^{1^\prime}h}{|q|^2}\right)+\nabla_A^{1^\prime}\left(\frac{z_0^{0^\prime}h}{|q|^2}\right),\\
 \end{aligned}
\end{equation*}
where $A=0,1$.

Recall that $z_0^{0^\prime}z_1^{1^\prime}-z_0^{1^\prime}z_1^{0^\prime}=|q|^2$. For each $A=0,1$, the following identities hold:
\[-\nabla_A^{0^\prime} \frac{z_1^{1^\prime}}{|q|^2} +\nabla_A^{1^\prime} \frac{z_1^{0^\prime}}{|q|^2}=\frac{2\epsilon^{1A}\epsilon^{1^\prime0^\prime}}{|q|^2}-\frac{2z_A^{0^\prime}z_1^{1^\prime}}{|q|^4}-\frac{2\epsilon^{1A}\epsilon^{0^\prime1^\prime}}{|q|^2}+\frac{2z_A^{1^\prime}z_1^{0^\prime}}{|q|^4}=
\begin{cases}
\frac{2}{|q|^2}, & \text{ if } A=0,\\
0, &  \text{ if }A=1,
\end{cases}\]
and
\[-\nabla_A^{0^\prime} \frac{z_0^{1^\prime}}{|q|^2} +\nabla_A^{1^\prime} \frac{z_0^{0^\prime}}{|q|^2}=\frac{2\epsilon^{0A}\epsilon^{1^\prime0^\prime}}{|q|^2}-\frac{2z_A^{0^\prime}z_0^{1^\prime}}{|q|^4}-\frac{2\epsilon^{0A}\epsilon^{0^\prime1^\prime}}{|q|^2}+\frac{2z_A^{1^\prime}z_0^{0^\prime}}{|q|^4}
=\begin{cases}
0, & \text{ if } A=0,\\
-\frac{2}{|q|^2}, &  \text{ if }A=1.
\end{cases} \]
This implies that
\begin{equation}\label{eq:thm inverse:5}
\begin{aligned}
    -\left(\mathscr{D}^{(1)}\left(\frac{\overline{z_0^{0^\prime}}}{|q|^2}h,\frac{\overline{z_0^{1^\prime}}}{|q|^2}h\right)\right)_A
    &=-\frac{z_1^{1^\prime}}{|q|^2}\nabla_A^{0^\prime} h+\frac{z_1^{0^\prime}}{|q|^2}\nabla_A^{1^\prime} h+\delta_A^{0}\frac{2h}{|q|^2}=(L_1\mathscr{D}^{(2)}X)_A,\\
    \left(\mathscr{D}^{(1)}\left(\frac{\overline{z_1^{0^\prime}}}{|q|^2}h,\frac{\overline{z_1^{1^\prime}}}{|q|^2}h\right)\right)_A
    &=-\frac{z_0^{1^\prime}}{|q|^2}\nabla_A^{0^\prime} h+\frac{z_0^{0^\prime}}{|q|^2}\nabla_A^{1^\prime} h-\delta_A^{1}\frac{2h}{|q|^2}=(L_0\mathscr{D}^{(2)}X)_A,
 \end{aligned}
\end{equation}
where $A=0,1$.

We define $Y\in C^\infty(\Omega,\mathbb{C}^2\otimes \mathbb{C}^2)$ by
\begin{equation*}
    \begin{pmatrix}
        Y_{00^\prime} & Y_{10^\prime}  \\
        Y_{01^\prime} &  Y_{11^\prime}
    \end{pmatrix}
    =
    \frac{1}{|q|^2}
    \begin{pmatrix}
    z_0^{1^\prime} & z_1^{1^\prime}\\
    -z_0^{0^\prime} & -z_1^{0^\prime}
    \end{pmatrix}
    \begin{pmatrix}
    \left(\mathscr{D}^{(1)}\left(\frac{\overline{z_0^{0^\prime}}}{|q|^2}h,\frac{\overline{z_0^{1^\prime}}}{|q|^2}h\right)\right)_0 &\left(\mathscr{D}^{(1)}\left(\frac{\overline{z_0^{0^\prime}}}{|q|^2}h,\frac{\overline{z_0^{1^\prime}}}{|q|^2}h\right)\right)_1\\
    \left(\mathscr{D}^{(1)}\left(\frac{\overline{z_1^{0^\prime}}}{|q|^2}h,\frac{\overline{z_1^{1^\prime}}}{|q|^2}h\right)\right)_0 & \left(\mathscr{D}^{(1)}\left(\frac{\overline{z_1^{0^\prime}}}{|q|^2}h,\frac{\overline{z_1^{1^\prime}}}{|q|^2}h\right)\right)_1
    \end{pmatrix}.
\end{equation*}
It is clear that
\begin{align*}
\begin{pmatrix}
    (-L_1Y)_0 & (-L_1Y)_1\\
    (L_0Y)_0 & (L_0Y)_1
\end{pmatrix}
&=
\begin{pmatrix}
    -z_1^{0^\prime}Y_{00^\prime}-z_1^{1^\prime}Y_{01^\prime} &  -z_1^{0^\prime}Y_{10^\prime}-z_1^{1^\prime}Y_{11^\prime}\\
     z_0^{0^\prime}Y_{00^\prime}+z_0^{1^\prime}Y_{01^\prime} & z_0^{0^\prime}Y_{10^\prime}+z_0^{1^\prime}Y_{11^\prime}
\end{pmatrix}
=
\begin{pmatrix}
    -z_1^{0^\prime} & -z_1^{1^\prime}\\
    z_0^{0^\prime} & z_0^{1^\prime}
\end{pmatrix}
\begin{pmatrix}
    Y_{00^\prime} & Y_{10^\prime}  \\
    Y_{01^\prime} &  Y_{11^\prime}
\end{pmatrix}\\
&= \begin{pmatrix}
    \left(\mathscr{D}^{(1)}\left(\frac{\overline{z_0^{0^\prime}}}{|q|^2}h,\frac{\overline{z_0^{1^\prime}}}{|q|^2}h\right)\right)_0 &\left(\mathscr{D}^{(1)}\left(\frac{\overline{z_0^{0^\prime}}}{|q|^2}h,\frac{\overline{z_0^{1^\prime}}}{|q|^2}h\right)\right)_1\\
    \left(\mathscr{D}^{(1)}\left(\frac{\overline{z_1^{0^\prime}}}{|q|^2}h,\frac{\overline{z_1^{1^\prime}}}{|q|^2}h\right)\right)_0 & \left(\mathscr{D}^{(1)}\left(\frac{\overline{z_1^{0^\prime}}}{|q|^2}h,\frac{\overline{z_1^{1^\prime}}}{|q|^2}h\right)\right)_1
    \end{pmatrix}.
\end{align*}
In summary, we have
\begin{equation}\label{eq:thm inverse: property of Y}
    L_1Y=-\mathscr{D}^{(1)}\left(\frac{\overline{z_0^{0^\prime}}}{|q|^2}h,\frac{\overline{z_0^{1^\prime}}}{|q|^2}h\right),\qquad L_0Y=\mathscr{D}^{(1)}\left(\frac{\overline{z_1^{0^\prime}}}{|q|^2}h,\frac{\overline{z_1^{1^\prime}}}{|q|^2}h\right).
\end{equation}
It follows from direct computation that
\begin{align*}
\begin{pmatrix}
    Y_{00^\prime} & Y_{10^\prime}  \\
    Y_{01^\prime} &  Y_{11^\prime}
\end{pmatrix}&=\frac{1}{|q|^4}
\begin{pmatrix}
    z_0^{1^\prime} & z_1^{1^\prime}\\
    -z_0^{0^\prime} & -z_1^{0^\prime}
    \end{pmatrix}
\begin{pmatrix}
    z_1^{1^\prime}\nabla_0^{0^\prime}h-z_1^{0^\prime}\nabla_0^{1^\prime}h-2h & z_1^{1^\prime}\nabla_1^{0^\prime}h-z_1^{0^\prime}\nabla_1^{1^\prime}h\\
    -z_0^{1^\prime}\nabla_0^{0^\prime}h+z_0^{0^\prime}\nabla_0^{1^\prime}h & -z_0^{1^\prime}\nabla_1^{0^\prime}h+z_0^{0^\prime}\nabla_1^{1^\prime}h-2h
\end{pmatrix}\\
&=\frac{1}{|q|^4}\begin{pmatrix}
    |q|^2\nabla_0^{1^\prime}h-2z_0^{1^\prime}h & |q|^2\nabla_1^{1^\prime}h-2z_1^{1^\prime}h\\
    -|q|^2\nabla_0^{0^\prime}h+2z_0^{0^\prime}h & -|q|^2\nabla_1^{0^\prime}h+2z_1^{0^\prime}h
\end{pmatrix}
\end{align*}
In summary, the equation
\begin{equation}\label{eq:thm inverse:D2X}
    \left(Y\right)_{Aj^\prime}=\sum_{k =0,1}\epsilon^{k j}\left(\frac{1}{|q|^2}\nabla_{A}^{k^\prime}h-2\frac{z_{A}^{k^\prime}}{|q|^4}h\right)
\end{equation}
holds for each $A,j=0,1$.

If there exists $X$ such that $\mathscr{D}^{(2)}X=Y$, then $X$ satisfies equation \eqref{eq:thm inverse:2} and $f_X,g_X$ are $1$-regular.

However, the solvability of $\mathscr{D}^{(2)}X=Y$ requires that $\mathscr{D}_1^{(2)} Y=0$. Thus, our next step is to check this condition.

It follows from the definition of $\mathscr{D}_1^{(2)}$ \eqref{eq:def of D1}, the expression for $Y$ in \eqref{eq:thm inverse:D2X} and equation \eqref{eq:thm inverse:4} that
\begin{equation}\label{eq:thm inverse:check D1Y equals to 0}
    \begin{aligned}
        \mathscr{D}_1^{(2)} Y&=\sum_{A,B,j=0,1}\epsilon^{BA}\nabla^{j^\prime}_AY_{Bj^\prime}\\
        &=\sum_{A,B,j,k=0,1}\epsilon^{BA}\epsilon^{kj}\left(\frac{1}{|q|^2}\nabla_{A}^{j^\prime}\nabla_B^{k^\prime}h+\nabla_{A}^{j^\prime}\frac{1}{|q|^2}\nabla_B^{k^\prime}h-2\frac{z_B^{k^\prime}}{|q|^4}\nabla_A^{j^\prime}h-2h\nabla_A^{j^\prime}\frac{z_B^{k^\prime}}{|q|^4}\right).
    \end{aligned}
\end{equation}
We divide the calculations of equation \eqref{eq:thm inverse:check D1Y equals to 0} into following parts.
The calculations of sum of the second derivative terms of $h$ are given as following
\begin{equation}\label{eq:thm inverse:check D1Y equals to 0:1}
    \sum_{A,B,j,k=0,1}\epsilon^{BA}\epsilon^{kj}\frac{1}{|q|^2}\nabla_{A}^{j^\prime}\nabla_B^{k^\prime}h=\frac{2}{|q|^2}(\nabla_0^{0^\prime}\nabla_{1}^{1^\prime}-\nabla_0^{1^\prime}\nabla_{1}^{0^\prime})h=\frac{2}{|q|^2}\Delta h=0.
\end{equation}
The calculations of sum of the first derivative terms of $h$ follows from
\begin{equation}\label{eq:thm inverse:check D1Y equals to 0:2}
\begin{aligned}
&\quad \sum_{A,B,j,k=0,1}\epsilon^{BA}\epsilon^{kj}\left(\nabla_{A}^{j^\prime}\frac{1}{|q|^2}\nabla_B^{k^\prime}h-2\frac{z_B^{k^\prime}}{|q|^4}\nabla_A^{j^\prime}h\right)\\
&=\sum_{A,B,j,k=0,1}\epsilon^{BA}\epsilon^{kj}\left(\nabla_{A}^{j^\prime}\frac{1}{|q|^2}\nabla_B^{k^\prime}h-\nabla_{B}^{k^\prime}\frac{1}{|q|^2}\nabla_A^{j^\prime}h\right)=0.
\end{aligned}
\end{equation}
Here we use the facts that $\epsilon^{BA}\epsilon^{kj}=\epsilon^{AB}\epsilon^{jk}$ and the sum is independent of the choice of summation indices. Finally, we calculate the sum of terms of $h$. It follows from equation \eqref{eq:thm inverse:4} that
\begin{equation}\label{eq:thm inverse:check D1Y equals to 0:3}
\begin{aligned}
     \sum_{A,B,j,k=0,1}2\epsilon^{BA}\epsilon^{kj}h\nabla_A^{j^\prime}\frac{z_B^{k^\prime}}{|q|^4}&=h\sum_{A,B,j,k=0,1}\left(-4\epsilon^{BA}\epsilon^{kj}\epsilon^{BA}\epsilon^{kj}\frac{1}{|q|^4}+8\epsilon^{BA}\epsilon^{kj}\frac{z_A^{j^\prime}z_B^{k^\prime}}{|q|^6}\right)\\
    &=-16h\frac{1}{|q|^4}+16h\frac{z_0^{0^\prime}z_1^{1^\prime}-z_0^{1^\prime}z_1^{0^\prime}}{|q|^6}=0.
    \end{aligned}
\end{equation}

It follows from \eqref{eq:thm inverse:check D1Y equals to 0}, \eqref{eq:thm inverse:check D1Y equals to 0:1}, \eqref{eq:thm inverse:check D1Y equals to 0:2} and \eqref{eq:thm inverse:check D1Y equals to 0:3} that
\begin{equation*}
    \mathscr{D}_1^{(2)}Y=0.
\end{equation*}
Then Theorem \ref{thm:real valued harmonic function} implies that there exists $X\in C^\infty(\Omega,\odot^2\mathbb{C}^2)$ such that
\begin{equation}\label{eq:thm inverse:solution of equation D1}
    \mathscr{D}^{(2)}X=Y.
\end{equation}
Combine \eqref{eq:thm inverse: obviously:1}, \eqref{eq:thm inverse: obviously:2}, \eqref{eq:thm inverse:3}, \eqref{eq:thm inverse: property of Y} and \eqref{eq:thm inverse:solution of equation D1} to get that
\begin{equation*}
    h=\frac{1}{2}L_0\left(\left(\frac{\overline{z_0^{0^\prime}}}{|q|^2}h,\frac{\overline{z_0^{1^\prime}}}{|q|^2}h\right)+L_1X\right)+\frac{1}{2}\left( \left(\frac{\overline{z_1^{0^\prime}}}{|q|^2}h,\frac{\overline{z_1^{1^\prime}}}{|q|^2}h\right)-L_0X\right)
\end{equation*}
and functions
\begin{equation*}
    \left(\frac{\overline{z_0^{0^\prime}}}{|q|^2}h,\frac{\overline{z_0^{1^\prime}}}{|q|^2}h\right)+L_1X,\qquad \left(\frac{\overline{z_1^{0^\prime}}}{|q|^2}h,\frac{\overline{z_1^{1^\prime}}}{|q|^2}h\right)-L_0X
\end{equation*}
are $1$-regular. This completes the proof. \qed

 \begin{remark}
The concept of $k$-regular domains was introduced in \cite{Wang25} and is defined as follows.
\begin{definition}\cite[P.15]{Wang25}
A domain $D$ is called a domain of $k$-regularity if ones can not find two nonempty open sets $D_1$ and $D_2$ such that (1) $D_1$ is connected, $D_1\nsubseteq D$ and $D_2\subseteq D_1\cap D$; (2) for each $k$-regular function $f$ on $D$, there is a $k$-regular function $\tilde{f}$ on $D_1$ satisfying $f=\tilde{f}$on $D_2$.
\end{definition}
In \cite{Wang25}, Wang also proved that every quaternionic linearly convex domain is a domain of $k$-regularity, which implies that any domain in four-dimensional space is a domain of $k$-regularity. Using Theorem \ref{thm:inverse problem} from our manuscript, we can provide an alternative proof for the case $k=1$  under the additional condition that \(H^3(\Omega, \mathbb{R}) = 0\).

Assume that \(\Omega\subset \mathbb{H}\) satisfies \(H^3(\Omega,\mathbb{R})=0\) and \(0\in \partial\Omega\). Then, by Theorem \ref{thm:inverse problem}, there exist two \(1\)-regular functions \(f\) and \(g\) such that \(L_0 f + L_1 g = 1\). Applying the Cauchy inequality, we obtain
\[
(|f|^2+|g|^2)(q) > \frac{C}{|q|^2},
\]
for some positive constant \(C\) and for all \(q\in \Omega\). As a result, on any open set of the form \(\Omega\cap B(0,\epsilon)\), at least one of \(|f|\) or \(|g|\) must be unbounded. If, for instance, \(|f|\) is unbounded, then there exists no \(1\)-regular function \(F\) on \(B(0,\epsilon)\cup \Omega\) such that \(F|_{\Omega}=f\).
\end{remark}

\bigskip

\textbf{Acknowledgements}

The authors would like to thank the referees for many valuable suggestions.

\bigskip

\end{document}